\newtheorem{theorem}{Theorem}[section]
\newtheorem*{theorem*}{Theorem}
\newtheorem{lemma}[theorem]{Lemma}
\newtheorem{corollary}[theorem]{Corollary}
\newtheorem{proposition}[theorem]{Proposition}
\newtheorem{definition}[theorem]{Definition}
\newtheorem{example}[theorem]{Example}
\newtheorem{remark}[theorem]{Remark}
\newcommand{\C}{\mathbb C}
\newcommand{\R}{\mathbb R}
\newcommand{\N}{\mathbb N}
\begin{document}

\title[On Lipschitz rigidity of complex analytic sets]
{On Lipschitz rigidity of complex analytic sets}

\author[Alexande Fernandes]{Alexandre Fernandes}
\author[J. Edson Sampaio]{J. Edson Sampaio}

\address[Alexandre Fernandes]{ Departamento de Matem\'atica, Universidade Federal do Cear\'a,
	      Rua Campus do Pici, s/n, Bloco 914, Pici, 60440-900, 
	      Fortaleza-CE, Brazil. \newline  
              E-mail: {\tt alex@mat.ufc.br}
} 
\address[J. Edson Sampaio]{ BCAM - Basque Center for Applied Mathematics,
	      Mazarredo, 14 E48009 Bilbao, Basque Country - Spain.   
	      E-mail: {\tt esampaio@bcamath.org} \newline	      
              and       \newline    
              Departamento de Matem\'atica, Universidade Federal do Cear\'a,
	      Rua Campus do Pici, s/n, Bloco 914, Pici, 60440-900, 
	      Fortaleza-CE, Brazil. \newline  
              E-mail: {\tt edsonsampaio@mat.ufc.br}
}

\keywords{Lipschitz regularity, Tangent cone at infinity, algebraic sets}
\subjclass[2010]{14B05; 32S50}
\thanks{The first named author was partially supported by CNPq-Brazil grant 302764/2014-7.
The second named author was supported by the ERCEA 615655 NMST Consolidator Grant and also by the Basque Government through the BERC 2014-2017 program and by Spanish Ministry of Economy and Competitiveness MINECO: BCAM Severo Ochoa excellence accreditation SEV-2013-0323.
}

\begin{abstract}
We prove that any complex analytic set in $\mathbb{C}^n$ which is Lipschitz normally embedded at infinity and has tangent cone at infinity that is a linear subspace of $\mathbb{C}^n$ must be an affine linear subspace of $\mathbb{C}^n$ itself. No restrictions on the singular set, dimension nor codimension are required. In particular, a complex algebraic set in $\mathbb{C}^n$ which is Lipschitz regular at infinity is an affine linear subspace.
\end{abstract}

\maketitle

\section{Introduction}
Local Lipschtz geometry of complex algebraic sets has been intensively studied in the last years. One of the recent works on this subject, the paper \cite{BirbrairFLS:2016} (see also \cite{Sampaio:2016}),  somehow, showed a kind of rigidity of such a local geometric structure of algebraic sets. Indeed, it was proved  that Lipschitz regular complex algebraic germs of sets in $\C^n$, that is, germs of complex algebraic sets in $\C^n$ which are bi-Lipschitz homeomorphic to the germ of $\R^d$ at some point, are analytically smooth (see Theorem 3.2 in \cite{BirbrairFLS:2016} and Theorem 4.2 in \cite{Sampaio:2016}). From another way, looking to scrutinize global Lipschitz geometry of such sets in some sense, we arrived on the notion of subsets of $\C^n$ to be Lipschitz regular at infinity,  that means, subsets that outside a compact subset are bi-Lipschitz homeomorphic to the complement of a Euclidean ball in some $\R^d$.

The aim of this paper is to provide a proof that any complex algebraic set in $\mathbb{C}^n$ which is Lipschitz normally embedded at infinity and its tangent cone at infinity is a linear subspace of $\mathbb{C}^n$ must be an affine linear subspace of $\mathbb{C}^n$. In particular, complex algebraic subsets of $\C^n$ which are Lipschitz regular at infinity are affine linear subspace of $\C^n$. The main ingredients of these proofs stand on the notion of tangent cone at  infinity and the inner distance on path connected Euclidean subsets which we are going to address in the sections \ref{cones} and \ref{inner distance} respectively.

Let us observe that these results are somehow a Liouville or Bernstein type theorem as, for instance, a celebrated theorem due to Bombieri, De Giorgi and Miranda which says that entire positive minimal graph of functions in Euclidean spaces must be horizontal affine hyperplane. Finally, we get an application of the main results which is somehow  like  Theorem J in ([5], p. 180) (see also
[11] and [7]), namely, we prove that pure dimension analytic subsets of $\C^n$ which is  Lipschitz Normally Embedded and with tangent cone at infity being a linear subspace must be linear subspaces of $\C^n$ themself. Notice that, we address complex analytic sets in $\mathbb{C}^n$ which are not necessarily graph of smooth functions; a priori, they are not supposed  even smooth.

\bigskip


\section{Preliminaries}\label{preliminaries}
All the subsets of $\R^n$ (or $\C^n$) are considered equipped with the induced Euclidean metric.

\begin{definition}
Let $X\subset\R^n$ and $Y\subset\R^m$. A mapping $f\colon X\rightarrow Y$ is called {\bf Lipschitz} if there exists $\lambda >0$ such that is
$$\|f(x_1)-f(x_2)\|\le \lambda \|x_1-x_2\|$$ for all $x_1,x_2\in X$. A Lipschitz mapping $f\colon X\rightarrow Y$ is called
{\bf bi-Lipschitz} if its inverse mapping exists and is Lipschitz.
\end{definition}

\begin{definition}
Let $X\subset \R^n$ and $Y\subset\R^m$ be two subsets. We say that $X$ and $Y$ are {\bf bi-Lipschitz homeomorphic at infinity}, if there exist compact subsets $K\subset\R^n$ and $\widetilde K\subset \R^m$ and a bi-Lipschitz homeomorphism $\phi \colon X\setminus K\rightarrow Y\setminus \widetilde K$.
\end{definition}

\begin{definition}
A subset $X\subset\R^n$ is called {\bf Lipschitz regular at infinity} if $X$ and $\R^k$ are bi-Lipschitz homeomorphic at infinity, for some $k\in\N$.
\end{definition}

\begin{example}\label{real_example} Let $X\subset\R^3$ be defined by $ X = \{ (x,y,z)\in\R^3 \ : \ x^2+y^2=z^3 \}.$ We see that $X$ is an algebraic subset of $\R^3$ with an isolated singularity at $0\in\R^3.$ By using the mapping $\pi\colon X\rightarrow\R^2$; $\pi (x,y,z)=(x,y)$, it is easy to see that $X$ is Lipschitz regular at infinity.
\end{example}

\begin{example} Let $Y\subset\R^3$ be defined by $ Y = \{ (x,y,z)\in\R^3 \ : \ x^2+y^2=z \}.$ We see that $Y$ is a smooth algebraic subset of $\R^3$. From another way,  $Y$ is not Lipschitz regular at infinity. For instance, one can use Theorem \ref{tg_cones} to see that $Y$ is not Lipschitz regular at infinity.
\end{example}

At this moment we are ready to state one of the main results of the paper.

\begin{theorem}\label{main theorem}
Let $X\subset\C^n$ be a complex algebraic subset. If $X$ is Lipschitz regular at infinity, then $X$ is an affine linear subspace of $\C^n$.
\end{theorem}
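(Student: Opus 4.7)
My strategy is to reduce Theorem \ref{main theorem} to the more general statement announced in the abstract, namely that a complex analytic $X\subset\C^n$ which is Lipschitz normally embedded (LNE) at infinity and whose tangent cone at infinity is a linear subspace must be an affine linear subspace. So the first task is to establish the hypotheses of that general statement from ``Lipschitz regular at infinity.'' The LNE part is immediate: a bi-Lipschitz homeomorphism at infinity $\varphi\colon X\setminus K\to \R^k\setminus\widetilde K$ transports the comparability of inner and outer distances from $\R^k$ back to $X$, up to the loss of a bounded additive constant that is absorbed by enlarging $K$. For the tangent cone at infinity $C_\infty(X)$, I would invoke Theorem \ref{tg_cones} (to the effect that the tangent cone at infinity is preserved, up to bi-Lipschitz equivalence, under LNE bi-Lipschitz homeomorphisms at infinity) to conclude that $C_\infty(X)$ is bi-Lipschitz equivalent to $C_\infty(\R^k)=\R^k$ at the cone point. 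Since $X$ is complex algebraic, $C_\infty(X)$ is a complex algebraic cone, and being locally bi-Lipschitz to $\R^k$ at the origin it is a Lipschitz regular complex algebraic germ; by the local rigidity theorem (Theorem 3.2 in \cite{BirbrairFLS:2016} / Theorem 4.2 in \cite{Sampaio:2016}) it is analytically smooth at $0$, hence, being a cone, a linear subspace $L\subset\C^n$.

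Having reached the hypotheses of the general theorem, the heart of the proof is to show that a pure $d$-dimensional complex analytic $X\subset\C^n$ which is LNE at infinity with $C_\infty(X)=L$ a linear $d$-plane must satisfy $X=L+v$ for some $v\in\C^n$. The plan is to pick a linear projection $\pi\colon \C^n\to L$ along a complementary $(n-d)$-plane $L^\perp$; since the tangent cone at infinity of $X$ equals $L$, for $\|p\|$ large enough the affine fiber $\pi^{-1}(p)$ meets $X$ in a finite set of cardinality bounded by a constant $m$ (the degree at infinity, or multiplicity of $X$ at infinity along $L$), and outside a compact subset of $L$ the restriction $\pi|_X$ is a proper $m$-sheeted branched covering onto $L$. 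Since $X$ is asymptotic to $L$, the $\|\cdot\|$-distance of any point in a fibre $\pi^{-1}(p)\cap X$ to $p$ grows slower than $\|p\|$. The goal is to prove $m=1$; once this is done, $\pi|_X$ is proper and injective off a compact, hence an analytic bijection from $X$ minus a compact onto $L$ minus a compact, so $X$ is the graph of an entire analytic map $f\colon L\to L^\perp$ with $f(p)/\|p\|\to 0$, forcing $f$ to be a constant by a Liouville-type argument, and therefore $X$ to be an affine translate of $L$.

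To rule out $m\ge 2$, I would argue by contradiction using the LNE at infinity hypothesis. If $m\ge 2$, pick two distinct branches $X_1,X_2$ of $X$ lying over a neighborhood of $p\in L$ with $\|p\|$ large, giving points $x_1(p)\in X_1$, $x_2(p)\in X_2$ with $\pi(x_i(p))=p$; their outer (Euclidean) distance is $o(\|p\|)$, while any path inside $X$ joining them must pass through the branch locus of $\pi|_X$, which is semialgebraic and asymptotically concentrated, so the inner distance $d_X(x_1(p),x_2(p))$ is at least of order $\|p\|$. This contradicts the LNE inequality $d_X \le C\,\|\cdot\|$ at infinity and forces $m=1$. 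The main obstacle I expect is precisely this quantitative control of the inner distance between asymptotic branches: one needs to know that any curve in $X$ joining two different sheets is forced, for $\|p\|$ large, to travel back to a bounded ``waist'' of the branch locus — or at least that its length is comparable to $\|p\|$ — and to do so without assuming anything on the singular set of $X$. This is where the full strength of the tangent-cone-at-infinity and inner-distance machinery promised in sections \ref{cones} and \ref{inner distance} will enter.
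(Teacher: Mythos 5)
Your overall architecture coincides with the paper's: deduce that $X$ is Lipschitz normally embedded outside a compact set (Corollary \ref{neighborhood}), show that $C_{\infty}(X)$ is a complex linear subspace, and then run a branched-covering argument with the orthogonal projection $\pi\colon\C^n\to C_{\infty}(X)$ to force the number of sheets to be $1$ (Theorem \ref{proposition A}). Two of your choices differ harmlessly from the paper's. For the linearity of $C_{\infty}(X)$ you invoke the local rigidity theorem of \cite{BirbrairFLS:2016}, whereas the paper only needs that $C_{\infty}(X)$ is a topological manifold and applies Prill's theorem (Lemma \ref{prill}) directly; both work. For the endgame you propose a graph-plus-Liouville argument, whereas the paper identifies the number of sheets with $\deg(X)$ (via Chirka's theory of ramified covers) and concludes by Proposition \ref{linear_prop} that degree one forces $X$ to be affine linear; the paper's route avoids having to extend your graph map across the excised compact set, a point you would still need to justify.

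The genuine gap is the step you flag yourself: the lower bound $d_X(x_1(p),x_2(p))\gtrsim\|p\|$ for two points in distinct sheets. The mechanism you propose --- that a path joining two sheets ``must pass through the branch locus'', pictured as ``asymptotically concentrated'' near a bounded waist --- is not correct. Such a path need not meet the ramification locus at all: on $\{y^2=x\}$ the path $s\mapsto(e^{2\pi is},e^{\pi is})$, $s\in[0,1]$, joins $(1,1)$ to $(1,-1)$ without approaching the branch point $x=0$. Moreover the ramification locus $\Sigma\subset C_{\infty}(X)$ is in general an unbounded algebraic subset, not concentrated near a compact set. The paper's resolution is to use that $\Sigma$ has dimension strictly less than $d$, hence one may choose a unit vector $v_0\in C_{\infty}(X)\setminus C_{\infty}(\Sigma)$; then a conical neighborhood $C_{\lambda,R}$ of the ray $t\mapsto tv_0$ avoids $\Sigma$ entirely. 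Any path in $X$ joining the two liftings $\gamma_1(t)$ and $\gamma_2(t)$ of $tv_0$ projects to a loop based at $tv_0$ which is not null-homotopic in $C_{\infty}(X)\setminus\Sigma$, hence must exit the simply connected cone $C_{\lambda,R}$; since $\pi$ is $1$-Lipschitz this forces the path to have length at least $2\lambda t$, while $\|\gamma_1(t)-\gamma_2(t)\|$ is $o(t)$ because both arcs are tangent to $v_0$ at infinity, contradicting normal embedding. Without this choice of a direction transverse to $\Sigma$ at infinity, your sketch does not yield the required linear lower bound on the inner distance.
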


We are going to prove this theorem in the Section \ref{main}. Notice that Theorem \ref{main theorem} does not hold true (with same assumptions) for real algebraic sets (cf. Example \ref{real_example} above).

\section{Inner distance}\label{inner distance}
Given a path connected subset $X\subset\R^m$ the
\emph{inner distance}  on $X$  is defined as follows: given two points $x_1,x_2\in X$, $d_X(x_1,x_2)$  is the infimum of the lengths of paths on $X$ connecting $x_1$ to $x_2$. As we said in the beginning of Section \ref{preliminaries} all the sets considered in this paper are supposed to be equipped with the Euclidean induced metric. Whenever we consider the inner distance, we emphasize it clearly.

\begin{definition}[See \cite{BirbrairM:2000}] A subset $X\subset\R^n$ is called {\bf Lipschitz normally embedded} if there exists $\lambda >0$ such that
$$d_X(x_1,x_2)\le \lambda \|x_1-x_2\|$$
for all $x_1,x_2\in X$.
\end{definition}

\begin{proposition}
If a closed unbounded subset $X\subset\R^n$ is Lipschitz regular at infinity, then there exists a compact $K\subset\R^n$ such that each connected component of $X\setminus K$ is Lipschitz normally embedded.
\end{proposition}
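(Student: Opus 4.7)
By the hypothesis of Lipschitz regularity at infinity, there exist compact sets $K_1\subset\R^n$, $K_2\subset\R^k$ and a bi-Lipschitz homeomorphism $\phi\colon X\setminus K_1\to\R^k\setminus K_2$ with some constant $\lambda\geq 1$. The plan is to reduce the problem to the model set $\R^k\setminus\overline{B(0,R)}$ and transfer the Lipschitz normal embedding property through $\phi$.

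Choose $R>0$ with $K_2\subset B(0,R)$ and set
\[
K:=K_1\cup\phi^{-1}\bigl(\overline{B(0,R)}\cap(\R^k\setminus K_2)\bigr).
\]
Compactness of $K$ is the first technical point: boundedness comes from $\phi^{-1}$ being Lipschitz, so the bounded set $\overline{B(0,R)}\cap(\R^k\setminus K_2)$ has bounded preimage in $\R^n$; closedness of $K$ in $\R^n$ is verified sequentially, using that $X$ and $K_1$ are closed and $\phi$ is continuous. By construction $\phi$ restricts to a bi-Lipschitz homeomorphism $\phi\colon X\setminus K\to\R^k\setminus\overline{B(0,R)}$, and the connected components of $X\setminus K$ correspond bijectively to those of $\R^k\setminus\overline{B(0,R)}$.

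Next I would verify that each connected component of $\R^k\setminus\overline{B(0,R)}$ is Lipschitz normally embedded. For $k=1$ the two components are half-lines and trivially normally embedded. For $k\geq 2$ the set is connected, and I would prove the bound
\[
d_{\R^k\setminus\overline{B(0,R)}}(y_1,y_2)\leq\bigl(1+\pi/2\bigr)\|y_1-y_2\|
\]
by cases. If the segment $[y_1,y_2]$ avoids $\overline{B(0,R)}$, it lies in the set and realizes the Euclidean distance. Otherwise let $p$ be the orthogonal projection of the origin onto the line through $y_1,y_2$; since the minimum of $\|\cdot\|$ on the segment is attained at $p$ (the endpoints have norm larger than $R$), $p$ lies on the segment and $h:=\|p\|\leq R$. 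Writing $y_i=p+s_iv$ for a unit direction $v$, the scalars $s_1,s_2$ have opposite signs and $\|y_1-y_2\|=|s_1|+|s_2|$. Approximate by the path that runs radially from $y_1$ to the sphere of radius $R+\varepsilon$, along a great-circle arc on that sphere, then radially out to $y_2$. The radial pieces contribute at most $(\|y_1\|-R)+(\|y_2\|-R)\leq|s_1|+|s_2|=\|y_1-y_2\|$, using $\sqrt{h^2+s^2}\leq R+|s|$ whenever $h\leq R$; the arc contributes $R\theta$ where $\theta$ is the angle between $y_1$ and $y_2$, and the comparison $\|y_1-y_2\|\geq 2R\sin(\theta/2)$ together with $\theta/(2\sin(\theta/2))\leq\pi/2$ on $[0,\pi]$ yields $R\theta\leq(\pi/2)\|y_1-y_2\|$. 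Letting $\varepsilon\to 0$ gives the claimed bound.

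For the final step, fix a connected component $C$ of $X\setminus K$. Then $\phi(C)$ is a connected component of $\R^k\setminus\overline{B(0,R)}$ and is Lipschitz normally embedded with some constant $\mu$. Any rectifiable path in $\phi(C)$ pulls back via $\phi^{-1}$ to a rectifiable path in $C$ whose length increases by a factor of at most $\lambda$, so for $x_1,x_2\in C$,
\[
d_C(x_1,x_2)\leq\lambda\,d_{\phi(C)}(\phi(x_1),\phi(x_2))\leq\lambda\mu\|\phi(x_1)-\phi(x_2)\|\leq\lambda^2\mu\,\|x_1-x_2\|,
\]
so $C$ is Lipschitz normally embedded. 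The main obstacle I anticipate is the geometric estimate in the third paragraph: the bound itself is intuitively clear, but verifying it rigorously requires the case split at $p$ and the small $\varepsilon$-approximation to keep the path strictly outside the closed ball.
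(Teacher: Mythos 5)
Your proof is correct and follows essentially the same route as the paper: reduce to the complement of a closed ball in $\R^k$, establish a Lipschitz normal embedding constant for that model set, and transfer it through the bi-Lipschitz map, with the $k=1$ case handled separately via half-lines. The only difference is that you supply details the paper leaves implicit — the explicit construction and compactness of $K$, and a rigorous derivation of the model estimate with constant $1+\pi/2$ where the paper simply asserts the bound $d_Y(p,q)\le\pi\|p-q\|$.
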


\begin{proof}Let $X\subset\R^n$ be  a closed and unbounded subset. Let us suppose that $X$ is Lipschitz regular
at infinity, that is, there exist compact subsets $K_1\subset\R^k$ and $K_2\subset\R^n$ and a bi-Lipschitz homeomorphism $\psi \colon \R^k\setminus K_1\rightarrow X\setminus K_2$. Without loss generality, one can suppose that $K_1$ is a  Euclidean closed ball. Let us denote $Y=\R^k\setminus K_1$, $Z=X\setminus K_2$.

First, let us  suppose that $k>1$. Since there are positive constant $\lambda_1 <\lambda_2$ such that:
$$ \lambda_1\|p-q\|\leq \|\psi(p)-\psi(q)\|\leq \lambda_2 \|p-q\|, \, \forall p,q\in Y,$$
it follows that
$$\lambda_1d_Y(p,q)\leq d_Z(\psi(p),\psi(q))\leq \lambda_2 d_Y(p,q), \, \forall p,q\in Y.$$
On the other hand,  $d_Y(p,q)\leq \pi|p-q|$, for all $p,q\in Y$ and, by those inequalities above, it follows that
$$d_Z(\psi(p),\psi(q))\leq\frac{\lambda_2\pi}{\lambda_1}\|\psi(p)-\psi(q)\|, \, \forall \ p,q\in Y.$$
Therefore,
$$d_Z(x,y)\leq\frac{\lambda_2\pi}{\lambda_1}\|x-y\|, \, \forall \ x,y\in Z.$$
In other words, $Z$ is Lipschitz normally embedded.

In the case where $k=1$, $\R\setminus K_1$ has two connected components which we denote  by $Y_1$ and $Y_2$. Actually, $Y_1$ and $Y_2$ are half-lines and, therefore, for each $i=1,2$, $d_{Y_i}(p,q)\leq |p-q|$, for all $p,q\in Y_i$. Likewise as it was done above, we have
$$d_{Z_i}(x,y)\leq\frac{\lambda_2}{\lambda_1}\|x-y\|, \, \forall \ x,y\in Z_i,$$
where $Z_1=\psi(Y_1)$ and $Z_2=\psi(Y_2)$ are the connected components of $Z$. Hence, the proposition is proved.
\end{proof}

Let us finish this section pointing out the following result which we are going to use in the proof of Theorem \ref{main theorem}

\begin{corollary}\label{neighborhood}
Let $X\subset\C^n$ be a complex algebraic subset. If $X$ is Lipschitz regular at infinity, then there exists a compact subset $K\subset\C^n$ such that $X\setminus K$ is Lipschitz normally embedded.
\end{corollary}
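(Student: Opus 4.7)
The plan is to derive this corollary as an immediate specialization of the preceding proposition, once I check that the complex structure automatically rules out the ``$k=1$'' case of that proposition.

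First I would carry out a trivial reduction. A complex algebraic subset $X\subset\C^n$ is closed in the Euclidean topology, so the hypotheses of the proposition will be satisfied as soon as $X$ is unbounded. If $X$ is bounded then, being complex algebraic, it is finite, and any compact ball $K$ containing $X$ makes $X\setminus K$ empty, and the statement is vacuously true. So I assume $X$ is unbounded and of positive complex dimension $d\ge 1$.

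Next, I would extract the parity constraint on $k$. By Lipschitz regularity at infinity, there exist compact subsets $K_1\subset\R^k$, $K_2\subset\C^n$ and a bi-Lipschitz homeomorphism $\psi\colon \R^k\setminus K_1\to X\setminus K_2$. Bi-Lipschitz maps preserve Hausdorff dimension; since the Hausdorff dimension of a positive-dimensional complex analytic subset of $\C^n$ equals $2d$, this forces $k=2d$, and in particular $k\ge 2$.

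With $k\ge 2$ in hand, I can choose $K_1$ to be a Euclidean closed ball (as in the proof of the proposition), so that $\R^k\setminus K_1$ is path connected; hence $X\setminus K_2$ is path connected as well. Now I invoke the proposition: since $X\setminus K_2$ has a single connected component, the conclusion of the proposition states exactly that $X\setminus K_2$ itself is Lipschitz normally embedded. Setting $K:=K_2$ yields the corollary.

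The only step that is not routine is the parity/Hausdorff-dimension observation that excludes $k=1$; after that, the corollary is a direct quotation of the first case of the proof of the proposition. I do not expect any genuine obstacle.
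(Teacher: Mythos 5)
Your proposal is correct and follows essentially the route the paper intends: the corollary is stated there without proof as an immediate consequence of the preceding proposition, with the case $k=1$ (two components) tacitly excluded. Your explicit justification that $k\ge 2$ --- via bi-Lipschitz invariance of Hausdorff dimension and the fact that an unbounded complex algebraic set has even Hausdorff dimension $2d\ge 2$ --- correctly fills in the one point the paper leaves implicit, and the rest is indeed just the first case of the proposition's proof applied to the single connected component $X\setminus K_2$.
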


\section{Tangent cone at infinity}\label{cones}
Let us start this section recalling two well known results about semialgebraic sets, namely, the Monotonicity Theorem and Curve Selection Lemma.

\begin{lemma}[Theorem 1.8 in \cite{HaP:2017}]\label{monotonicity}
Let $f\colon (a,b)\to \R$ be a semialgebraic function. Then, there are $a=a_0<a_1<...<a_k=b$ such that, for each $i=0,...,k-1$, the restriction $f|_{(a_i,a_{i+1})}$ is analytic and either constant, strictly increasing or strictly decreasing.
\end{lemma}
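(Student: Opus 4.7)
The plan is to combine a cell decomposition of $\R^2$ adapted to the graph of $f$ with the implicit function theorem and a sign analysis of $f'$.

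First I would consider the graph $\Gamma_f=\{(t,f(t)) : t\in(a,b)\}\subset\R^2$. Since $f$ is semialgebraic, $\Gamma_f$ is a semialgebraic subset of $\R^2$. I would apply the cylindrical algebraic decomposition theorem to $\R^2$ compatibly with $\Gamma_f$: this produces a finite partition of $(a,b)$ into points $a=a_0<a_1<\cdots<a_k=b$ and open intervals $(a_i,a_{i+1})$ such that over each open interval the graph $\Gamma_f$ is a cell, namely the graph of a continuous semialgebraic function. In particular $f|_{(a_i,a_{i+1})}$ is continuous.

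Next I would upgrade continuity to analyticity on each open subinterval, possibly after refining the partition by inserting finitely many more points. Being semialgebraic, the graph of $f|_{(a_i,a_{i+1})}$ is contained in the zero set of some nonzero polynomial $P(t,y)\in\R[t,y]$. The discriminant of $P$ with respect to $y$, together with the resultant of $P$ and $\partial P/\partial y$, vanishes on a finite subset of $(a_i,a_{i+1})$; away from this finite set, $\partial P/\partial y$ does not vanish along $\Gamma_f$, and the real-analytic implicit function theorem expresses $f$ locally as a real-analytic branch of the algebraic equation $P(t,y)=0$. Inserting these finitely many bad points into the partition yields a refined partition on whose open subintervals $f$ is real-analytic.

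Then I would study monotonicity. Since $f$ is analytic and semialgebraic on each $(a_i,a_{i+1})$, its derivative $f'$ is also a semialgebraic function on $(a_i,a_{i+1})$. Applying the cell decomposition of $\R^2$ to the graph of $f'$ (again compatibly with the interval), I obtain a further finite refinement of the partition so that on each resulting open subinterval $f'$ is continuous and, by another application, has constant sign: strictly positive, strictly negative, or identically zero. On such a subinterval, the mean value theorem yields that $f$ is strictly increasing, strictly decreasing, or constant, respectively. Taking the common refinement of all partitions produced in these steps gives the desired $a=a_0<a_1<\cdots<a_k=b$.

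The step I expect to be the main technical obstacle is the analyticity assertion: it requires having the cell decomposition strong enough (or a Puiseux-type argument) to guarantee that, after discarding finitely many points, $f$ coincides with a single analytic branch of a polynomial equation rather than jumping between branches. I would rely on the algebraic cylindrical decomposition together with the discriminant/resultant argument sketched above, which is the standard way this is handled in the semialgebraic setting.
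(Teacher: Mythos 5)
The paper does not prove this statement: it is imported verbatim as Theorem 1.8 of \cite{HaP:2017} (it also appears as the Monotonicity Theorem in \cite{BochnakCR:1998}, Proposition 2.5.2 together with the analyticity refinement), so there is no in-paper argument to compare yours against. Judged on its own, your sketch is the standard and correct proof. The three stages are sound: (i) cylindrical decomposition adapted to $\Gamma_f$ forces each cell over an open subinterval to be a single graph cell, giving continuity; (ii) since $\Gamma_f$ is one-dimensional, its Zariski closure is a proper algebraic subset of $\R^2$, so $\Gamma_f\subset P^{-1}(0)$ for a nonzero $P$, which you may take squarefree and with no factor in $\R[t]$ alone, whence $\mathrm{Res}_y(P,\partial P/\partial y)$ is a nonzero polynomial in $t$ and the analytic implicit function theorem applies off its finitely many zeros; (iii) $f'$ is semialgebraic by Tarski--Seidenberg, so $\{f'>0\}$, $\{f'<0\}$, $\{f'=0\}$ are finite unions of points and intervals, and the mean value theorem finishes. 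The ``branch-jumping'' worry you flag is not a real obstacle: analyticity is local, and at a point where the roots of $P(t,\cdot)$ are simple the finitely many branches are locally separated, so continuity of $f$ pins it to a single branch on a neighborhood; alternatively the sets $\{t: f(t)=\xi_j(t)\}$ are themselves semialgebraic and one refines the partition once more. The only items you should state explicitly rather than assume are the dimension/Zariski-closure fact in (ii) and the semialgebraicity of $f'$ in (iii); both are standard.
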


\begin{lemma}[Theorem 2.5.5 in \cite{BochnakCR:1998}]\label{selection_curve}
Let $X$ be a semialgebraic subset of $\R^n$ and $x\in\R^n$ being a non-isolated point of $\overline{X}$. Then, there
exists a continuous semialgebraic mapping $\gamma\colon [0,1]\to \R^n$ such that $\gamma(0)=x$ and $\gamma((0,1])\subset X$.
\end{lemma}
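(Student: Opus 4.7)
My plan is to prove the curve selection lemma by induction on the ambient dimension $n$, using cylindrical algebraic decomposition (CAD) of semialgebraic sets together with the Monotonicity Theorem (Lemma~\ref{monotonicity}) to secure continuity of a lifted path at the distinguished endpoint.

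For the \emph{base case} $n = 1$, a semialgebraic $X \subset \R$ is a finite union of points and open intervals; if $x \in \overline{X}$ is non-isolated, some interval $(a, b) \subset X$ has $x$ as an endpoint, and a linear parameterization of $[0,1]$ into $[a,b]$ provides the required continuous semialgebraic path.

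For the \emph{inductive step} ($n \geq 2$), I would reduce to $x = 0$ by translation and apply CAD to partition $\R^n$ into semialgebraic cells compatible with $X$. Because $0$ is non-isolated in $\overline{X}$, there exists a cell $C \subset X$ with $\dim C \geq 1$ and $0 \in \overline{C}$. Let $\pi\colon \R^n \to \R^{n-1}$ be the projection onto the first $n-1$ coordinates; then $D := \pi(C)$ is a cell in $\R^{n-1}$ with $0 \in \overline{D}$. If $\dim D = 0$, then $D = \{0\}$ and $C$ is a one-dimensional fiber over $0$, so the base case applies directly to $C \subset \pi^{-1}(0) \cong \R$. Otherwise the inductive hypothesis supplies a continuous semialgebraic path $\gamma_0\colon [0, 1] \to \overline{D}$ with $\gamma_0(0) = 0$ and $\gamma_0((0, 1]) \subset D$. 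Applying CAD in $\R\times \R$ to the semialgebraic set $F = \{(t, z) \in (0, 1] \times \R : (\gamma_0(t), z) \in C\}$, whose fibers $F_t$ are all non-empty, yields on some subinterval $(0, t_0)$ a continuous semialgebraic selection $z\colon (0, t_0) \to \R$ with $(\gamma_0(t), z(t)) \in C$; set $\gamma(t) := (\gamma_0(t), z(t))$.

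The main obstacle is ensuring $\gamma(t) \to 0$ as $t \to 0^+$, since the chosen cylindrical section need not extend continuously to $0 \in \overline{D}$. To overcome this, I would replace $C$ at the outset by $C \cap B(0, \varepsilon)$ for small $\varepsilon > 0$ (refining the CAD accordingly), noting that $\pi(C \cap B(0, \varepsilon))$ still has $0$ in its closure because $0 \in \overline{C}$, so the induction can be carried out with this smaller set in place of $C$. The resulting lift automatically satisfies $\|\gamma(t)\| < \varepsilon$ on $(0, t_0)$; the semialgebraic function $\varphi(t) := \|\gamma(t)\|$ is continuous and monotone on a right neighborhood of $0$ by the Monotonicity Theorem, so $L := \lim_{t \to 0^+}\varphi(t)$ exists in $[0, \varepsilon]$. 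Since $\varepsilon$ was arbitrary, performing the construction with a sequence $\varepsilon_k \to 0$ and passing to a nested sequence of lifts (or, equivalently, working with a CAD refinement tracking the strata of the function $y \mapsto \|y\|$ near $0$) forces $L = 0$, hence $\gamma(t) \to 0$. A final reparameterization on $[0, 1]$ then delivers the path claimed in the lemma.
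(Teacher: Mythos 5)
The paper offers no proof of this lemma --- it is quoted directly from Bochnak--Coste--Roy (Theorem 2.5.5) --- so your argument must stand on its own. Most of it does: the base case in $\R$, the existence of a cell $C\subset X$ with $\dim C\ge 1$ and $0\in\overline{C}$, the induction through the projection $D=\pi(C)$, and the extraction of a continuous semialgebraic section $z(t)$ over $\gamma_0$ by a one-variable cell decomposition of $F$ are all correct and standard.

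The gap is the final convergence step, and as written it is fatal. A single run of your construction inside $C\cap B(0,\varepsilon)$ produces a curve whose coordinates are bounded one-variable semialgebraic functions, hence by the Monotonicity Theorem $\gamma(t)$ converges as $t\to 0^+$ to \emph{some} point $p_\varepsilon\in\overline{C}$ with $\|p_\varepsilon\|\le\varepsilon$; nothing forces $p_\varepsilon=0$, since the fiberwise selection $z(t)$ may drift to a nonzero limit even though $0\in\overline{C}$. Running the construction for a sequence $\varepsilon_k\to 0$ yields a sequence of \emph{unrelated} curves: there is no ``nested sequence of lifts'' to pass to, and gluing pieces of infinitely many of them is not a semialgebraic operation (a semialgebraic arc is cut out by finitely many polynomial conditions, so an infinite concatenation generally fails to be semialgebraic). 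The standard repair is to parameterize by the distance to $x$ instead of by the projection: apply definable choice (a single cylindrical decomposition over the $t$-line) to $G=\{(t,y)\colon y\in X,\ \|y-x\|=t\}$, whose image on the $t$-axis contains an interval $(0,\delta)$ because $x$ is non-isolated in $\overline{X}$; a section $s(t)$, made continuous on a smaller interval by piecewise continuity of one-variable semialgebraic maps, then satisfies $\|s(t)-x\|=t\to 0$ automatically, so no limiting argument over $\varepsilon$ is needed (and the induction on $n$ becomes unnecessary). With that modification the proof closes.
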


\begin{definition}
Let $X\subset \R^m$ be an unbounded subset. We say that $E\subset \R^{m}$ is {\bf a tangent cone of $X$ at infinity} if there are a sequence $\{t_j\}_{j\in \N}$ of positive real numbers and a subset $D\subset \mathbb{S}^{m-1}$ such that $\lim\limits_{j\to \infty} t_j=+\infty $, $E=\{tv;\, v\in D\mbox{ and }t\geq 0\}$ and $D=\lim \frac{1}{t_j} X_{t_j}$, where $X_{t_j}=X\cap \{x\in \R^m; \|x\|=t_j\}$ for all $j\in \N$ and the limit is the Hausdorff's limit. When $X$ has a unique tangent cone at infinity, we denote it by $C_{\infty }(X)$ and we call $C_{\infty }(X)$ {\bf the tangent cone of $X$ at infinity}.
\end{definition}

\begin{proposition}\label{selection_lemma}
Let $Z\subset \R^n$ be an unbounded semialgebraic set and let $E$ be a tangent cone of $Z$ at infinity. A vector $v\in\R^n$ belongs to $E$ if and only if there exists a continuous semialgebraic  curve $\gamma\colon (\varepsilon ,+\infty )\to Z$  such that $\lim\limits _{t\to +\infty }|\gamma(t)|=+\infty $ and $\gamma(t)=tv+o_{\infty }(t),$ where $g(t)=o_{\infty }(t)$ means $\lim\limits _{t\to +\infty }\frac{g(t)}{t}=0$. 
\end{proposition}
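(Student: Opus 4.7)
The plan is to prove the two implications separately. The ``if'' direction is elementary; the ``only if'' direction is the substantive one, and I would reduce it to a tangent-cone form of the Curve Selection Lemma via the inversion $\phi(x)=x/\|x\|^2$ that sends infinity to the origin.

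For the ``if'' direction, assume $\gamma(t)=tv+o_\infty(t)$ is a continuous semialgebraic curve in $Z$ with $|\gamma(t)|\to\infty$. The case $v=0$ is trivial since $0\in E$. If $v\neq 0$, then $\|\gamma(t)\|=t\|v\|+o(t)$, which by Lemma \ref{monotonicity} is strictly increasing on some $(T,\infty)$; thus for every sufficiently large $j$ the intermediate value theorem furnishes $s_j$ with $\|\gamma(s_j)\|=t_j$. Then $\gamma(s_j)\in Z_{t_j}$ and $\gamma(s_j)/t_j=\gamma(s_j)/\|\gamma(s_j)\|\to v/\|v\|$, so $v/\|v\|$ lies in the Hausdorff limit $D$ (which is closed), and consequently $v\in E$.

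For the ``only if'' direction, let $v\in E$. The case $v=0$ is handled by choosing any semialgebraic curve going to infinity in $Z$ and reparametrizing so that its norm grows like $\sqrt{t}$. Assume now $v\neq 0$ and write $v=\|v\|w$ with $w\in D$, so there exist $z_j\in Z$ with $\|z_j\|=t_j\to\infty$ and $z_j/t_j\to w$. Applying $\phi$ converts the problem into one at the origin for the semialgebraic set $Y=\phi(Z)$: the points $y_j=\phi(z_j)$ satisfy $\|y_j\|=1/t_j\to 0$ and $y_j/\|y_j\|=z_j/t_j\to w$. To force the selected curve to carry the exact tangent direction $w$, I work on the spherical blow-up
\[
\widetilde Y=\{(r,u)\in(0,\infty)\times\mathbb{S}^{n-1}:ru\in Y\}\subset\R^{n+1},
\]
whose closure contains $(0,w)$ by virtue of the sequence $(1/t_j,z_j/t_j)$. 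Lemma \ref{selection_curve} applied to $\widetilde Y$ at $(0,w)$ produces a continuous semialgebraic arc $\sigma(s)=(r(s),u(s))$ with $\sigma(0)=(0,w)$ and $r(s)u(s)\in Y$ for $s>0$; Lemma \ref{monotonicity} then lets me reparametrize by $r$ itself, yielding a semialgebraic $\eta(r)=r\,u(s(r))\in Y$ with $u(s(r))\to w$, i.e., $\eta(r)=rw+o(r)$ as $r\to 0^+$.

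Finally, I pull back through $\phi$: the curve $\gamma_0(t):=\phi(\eta(1/t))$ lies in $Z$ for large $t$, and the decomposition $\eta(1/t)=(1/t)(w+\varepsilon(t))$ with $\varepsilon(t)\to 0$, together with $\|w\|=1$, immediately gives $\gamma_0(t)=tw+o_\infty(t)$. Rescaling by $\gamma(t):=\gamma_0(\|v\|t)$ then produces $\gamma(t)=tv+o_\infty(t)$ with $|\gamma(t)|\to\infty$, as required. The main conceptual obstacle is this ``tangent-cone version'' of curve selection: one must ensure that the selected curve does not merely approach $0$ but does so with prescribed direction $w$, which is exactly what the spherical blow-up together with the Monotonicity Lemma guarantee. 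Once $\eta(r)=rw+o(r)$ is in hand, checking that the nonlinear inversion $\phi$ preserves the $tv+o(t)$ asymptotics is a routine computation.
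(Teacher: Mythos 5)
Your proof is correct and takes essentially the same route as the paper's: the inversion $x\mapsto x/\|x\|^2$ to move the problem to the origin, a spherical blow-up so that the Curve Selection Lemma can be applied at the point $(0,w)$ carrying the prescribed direction, the Monotonicity Lemma to reparametrize by the radial coordinate, and the same $t\mapsto t^{1/2}$ device for $v=0$. The only differences are cosmetic: you normalize to a unit vector and rescale at the end, and you spell out the ``if'' direction, which the paper dismisses as obvious.
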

\begin{proof}
Let us consider the semialgebraic mapping $\phi:\R^n\setminus\{0\}\to \R^n\setminus\{0\}$ given by $\phi(x)=\frac{x}{\|x\|^2}$ and denote $X=\phi(Z\setminus \{0\})$.  Since $Z$ is an unbounded set,  $0\in \overline{X}$. Let $\rho:\mathbb{S}^{n-1}\times [0,+\infty )\to \R^n\setminus \{0\}$ be the mapping given by $\rho(x,t)=tx$. We see that $\rho|_{\mathbb{S}^{n-1}\times (0,+\infty )}:\mathbb{S}^{n-1}\times (0,+\infty )\to \R^n\setminus \{0\}$ is a semialgebraic homeomorphhism with inverse mapping $\rho^{-1}:\R^n\setminus \{0\}\to \mathbb{S}^{n-1}\times (0,+\infty )$ given by $\rho^{-1}(x)=(\frac{x}{\|x\|},\|x\|)$. Therefore, the set $Y=\rho^{-1}(X)\subset \mathbb{S}^{n-1}\times [0,+\infty )$ and $\overline{Y}$ are semialgebraic sets.

Since $E$ is a tangent cone of $Z$ at infinity, there are a sequence $\{t_j\}_{j\in \N}$ of positive real numbers and a subset $D\subset \mathbb{S}^{m-1}$ such that $\lim\limits_{j\to \infty} t_j=+\infty $, $E=\{tv;\, v\in D\mbox{ and }t\geq 0\}$ and $D=\lim \frac{1}{t_j} X_{t_j}$, where $X_{t_j}=X\cap \{x\in \R^m; \|x\|=t_j\}$ for all $j\in \N$. Let $w\in E$ and we are going to consider two cases:

\bigskip

\noindent 1) Case $w\not=0$. By definition of $E$, there exists a sequence  $\{z_k\}_{k\in\N}\subset X\setminus\{0\}$ such that $\|z_k\|=t_k$ for all $k\in \N$ and $\lim \limits_{k\to \infty }\frac{z_k}{s_k}=w$, where $s_k=\frac{t_k}{\|w\|}$ for all $k\in \N$. Thus, for each $k\in \N$, let us define $x_k=\phi(z_k)$. In this case,  $v:=\lim\limits _{k\to \infty } s_k\cdot x_k= \frac{w}{\|w\|^2}$. In particular, $\lim\limits _{k\to \infty } \frac{x_k}{\|x_k\|}=\frac{w}{\|w\|}=\frac{v}{\|v\|}$ and $u=(\frac{v}{\|v\|},0)\in \overline{Y}$. Then by Curve Selection Lemma (Lemma \ref{selection_curve}), there exists a continuous semialgebraic curve $\beta:[0,\delta)\to \overline{Y}$ such that $\beta(0)=u$ and $\beta((0,\delta ))\subset Y$. By writing $\beta(t)=(x(t),s(t))$, we get $s:[0,\delta )\to \R$ is a semialgebraic and non-constant function such that $s(0)=0$ and $s(t)>0$ if $t\in (0,\delta )$. By Lemma \ref{monotonicity}, one can suppose that $s$ is analytic in the domain $(0,\delta)$ and  strictly increasing. Hence, $s:[0,\delta/2]\to [0,\delta' ]$ is a semialgebraic homeomorphism, where $\delta' =s(\frac{\delta}{2})$. Let us define $\alpha:[0,r)\to \overline{X}$ by
$$
\alpha(t)=\rho\circ \beta\circ s^{-1}(t\|v\|)=\rho(x(s^{-1}(t\|v\|)),s(s^{-1}(t\|v\|)))=t\|v\|x(s^{-1}(t\|v\|)),
$$
where $r=\min\{\frac{\delta'}{\|v\|},\delta'\}$. Therefore,
$$
\lim\limits _{t\to 0^+}\frac{\alpha(t)}{t}=\lim\limits _{t\to 0^+}\frac{t\|v\|x(s^{-1}(t))}{t}=\lim\limits _{t\to 0^+}\|v\|x(s^{-1}(t))=\|v\|x(0)=v,
$$
and $\alpha(t)=tv+o(t)$. Finally, by defining $\gamma:(\frac{1}{r},+\infty )\to Z$ in this way $\gamma(t)=\phi^{-1}(\alpha(\frac{1}{t}))$, we get
\begin{eqnarray*}
\gamma(t) &=& \frac{\frac{1}{t}v+o(\frac{1}{t})}{\|\frac{1}{t}v+o(\frac{1}{t})\|^2}=t\textstyle{\frac{v}{\|v\|^2}}+o_{\infty }(t)\\
		  &=& tw+o_{\infty }(t).
\end{eqnarray*}
Since $\gamma$ is a composition of semialgebraic mappings,  $\gamma$ is a semialgebraic mapping as well.

\bigskip

\noindent 2) Case $w=0$. In this case,  let $\{x_k\}_{k\in\N}\subset Z$ be a sequence satisfying $\|x_k\|=t_k$ for all $k\in \N$ (this sequence exists, because $Z$ is unbounded). Thus, $\{\frac{x_k}{\|x_k\|}\}_{k\in\N}$ is a convergent sequence. Let $v\in  E$ be the limit of this sequence, i.e., $\lim\limits _{k\to \infty }\frac{x_k}{\|x_k\|}=v$. Likewise as it was done in the Case 1, one can show that there exists a continuous semialgebraic curve $\gamma\colon (\varepsilon,+\infty )\to Z$ such that $\gamma(t)=tv+o_{\infty }(t)$. Let us define $\widetilde\gamma\colon (\varepsilon^{2}, +\infty )\to Z$ by $\widetilde{\gamma}(t)=\gamma(t^{\frac{1}{2}})$. Thus, we have $\widetilde{\gamma}(t)=o_{\infty }(t)=tw+o_{\infty }(t)$.

\bigskip

We have proved that  $E\subset\{v\in\R^n;\, \exists \gamma:(\varepsilon,+\infty )\to Z$ semialgebraic s.t.  $\lim\limits _{t\to +\infty }|\gamma(t)|=+\infty$ and $\gamma(t)=tv+o_{\infty }(t)\}$. Since the other inclusion is obvious, we have finished the proof.
\end{proof}
In particular, Proposition \ref{selection_lemma} tell us that if $Z\subset \R^n$ is an unbounded semialgebraic set, then $Z$ has a unique tangent cone at infinity.

The next result is a kind of version at infinity of  Theorem 3.2 in \cite{Sampaio:2016}, where the second named author of this paper proved that  bi-Lipschitz homeomorphic subanalytic subsets have  bi-Lipschitz homeomorphic tangent cones.

\begin{theorem}\label{tg_cones}
Let $A\subset \R^m$ and $B\subset\R^n$ be unbounded semialgebraic subsets. If $A$ and $B$ are bi-Lipschitz homeomorphic at infinity, then their tangent cones at infinity $C_{\infty }(A)$ and $C_{\infty }(B)$ are bi-Lipschitz homeomorphic.
\end{theorem}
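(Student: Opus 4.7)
The plan is to build a bi-Lipschitz homeomorphism $d_\infty\phi : C_\infty(A) \to C_\infty(B)$ as a subsequential limit of rescaled copies of $\phi$. Let $\lambda_1 \le \lambda_2$ be the bi-Lipschitz constants of $\phi : A\setminus K_1 \to B\setminus K_2$. For $t>1$, set $A_t := t^{-1}(A\setminus K_1) \subset \R^m$ and $B_t := t^{-1}(B\setminus K_2) \subset \R^n$, and define $\phi_t : A_t \to B_t$ by $\phi_t(x) := \phi(tx)/t$; each $\phi_t$ is bi-Lipschitz with the same constants $\lambda_1,\lambda_2$. The uniqueness of the tangent cone at infinity for an unbounded semialgebraic set---noted immediately after Proposition \ref{selection_lemma}---then ensures that $A_t \to C_\infty(A)$ and $B_t \to C_\infty(B)$ in the Hausdorff sense on bounded subsets of the respective ambient spaces, as $t \to +\infty$.

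Next I extract a limit of the equi-Lipschitz family $\{\phi_t\}$ by an Arzel\`a--Ascoli argument adapted to varying domains. Fix a countable dense set $\{v_k\}_{k\in\N} \subset C_\infty(A)$ and, for each $k$, pick approximating points $x_{k,t} \in A_t$ with $x_{k,t} \to v_k$ as $t \to +\infty$. Boundedness of each sequence $t \mapsto \phi_t(x_{k,t})$ (from the upper Lipschitz bound) together with a standard diagonal argument produces $t_j \to +\infty$ with $\phi_{t_j}(x_{k,t_j}) \to w_k \in \R^n$ for every $k$. The equi-Lipschitz property extends the assignment $v_k \mapsto w_k$ by continuity to a $\lambda_2$-Lipschitz map $d_\infty\phi : C_\infty(A) \to \R^n$; its image lies in $C_\infty(B)$ because, for $v_k \ne 0$, the points $y_j := \phi(t_j x_{k,t_j}) \in B$ have norm tending to infinity (by the lower Lipschitz bound) and $y_j/\|y_j\| \to w_k/\|w_k\|$, so $w_k \in C_\infty(B)$ directly from the definition of the tangent cone; and $d_\infty\phi(0) = 0$ follows by continuity. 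Applying the identical construction to $\phi^{-1}$ and refining to a joint subsequence yields a $\lambda_1^{-1}$-Lipschitz map $d_\infty(\phi^{-1}) : C_\infty(B) \to C_\infty(A)$; passing the identities $\phi_{t_j} \circ \phi_{t_j}^{-1} = \mathrm{id}_{B_{t_j}}$ and $\phi_{t_j}^{-1} \circ \phi_{t_j} = \mathrm{id}_{A_{t_j}}$ to the limit shows that $d_\infty\phi$ and $d_\infty(\phi^{-1})$ are mutually inverse.

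I expect the main technical obstacle to be the clean execution of the Arzel\`a--Ascoli extraction on varying domains. Three points need checking: independence of the limit from the chosen approximations $x_{k,t}$, which follows from equi-continuity; membership of each $w_k$ in $C_\infty(B)$, which uses the sequential characterization of the tangent cone at infinity as above; and compatibility of the subsequences used to construct $d_\infty\phi$ and $d_\infty(\phi^{-1})$, achieved by a further diagonal refinement. The whole argument is the natural ``at infinity'' analogue of the proof of Theorem 3.2 in \cite{Sampaio:2016}, with rescaling $x \mapsto x/t$ for large $t$ playing the role there played by rescaling near a marked point.
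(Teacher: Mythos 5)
Your proposal is correct and follows essentially the same strategy as the paper: rescale the bi-Lipschitz map by $1/t$, extract a locally uniform limit via Arzel\`a--Ascoli plus a diagonal argument, and verify that this limit and the analogous limit built from $\phi^{-1}$ are mutually inverse Lipschitz maps carrying one tangent cone at infinity onto the other. The only real difference is that the paper first extends $\phi$ to a bi-Lipschitz homeomorphism of the ambient space $\R^N=\R^m\times\R^n$ (Lemma 3.1 in \cite{Sampaio:2016}), so the rescaled maps are defined on fixed closed balls and the varying-domain technicalities you flag disappear.
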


\begin{proof}
Let $\widetilde K_1\subset\R^m$ and $\widetilde K_2\subset \R^n$ be compact subsets such that there exists a bi-Lipschitz homeomorphism $\phi\colon A\setminus \widetilde K_1\to B\setminus \widetilde K_2$. Let us denote $X=A\setminus \widetilde K_1$ and $Y=B\setminus  \widetilde K_2$. By taking $\R^N=\R^m\times\R^n$ and doing the following identifications:
$$X \leftrightarrow X\times\{ 0\} \ \mbox{ and } \ Y \leftrightarrow \{ 0\}\times Y $$ one can suppose that $X,Y\subset\R^N$ and there exists a bi-Lipschitz map $\varphi\colon \R^N\to\R^N$ such that $\varphi(X)=Y$ (see Lemma 3.1 in \cite{Sampaio:2016}). Let $K>0$ be a constant such that
\begin{equation}
\frac{1}{K}\|x-y\|\leq \|\varphi(x)-\varphi(y)\|\leq K\|x-y\|, \quad \forall x,y\in \R^N.
\end{equation}

For each $k\in\N$, let us  define the mappings $\varphi_k,\psi_k:\R^N\to \R^N$ given by $\varphi_k(v)=\frac{1}{k}\varphi(kv)$ and $\psi_k(v)=\frac{1}{k}\varphi^{-1}(kv)$. For each integer $m\geq 1$, let us define $\varphi_{k,m}:=\varphi_k|_{\overline{B}_m}:\overline{B}_m\to \R^N$ and $\psi_{k,m}:=\psi_k|_{\overline{B}_{mK}} :\overline{B}_{mK}\to \R^N$, where $\overline{B}_r$ denotes the Euclidean closed ball of radius $r$ and with center at the origin in $\R^N$.
Since
$$
\frac{1}{K}\|x-y\|\leq \|\varphi_{k,1}(x)-\varphi_{k,1}(y)\|\leq K\|x-y\|, \quad \forall x,y\in \overline{B}_1,\,\, \forall k\in \N
$$
and
$$
\frac{1}{K}\|u-v\|\leq \|\psi_{k,1}(u)-\psi_{k,1}(v)\|\leq K\|u-v\|, \quad u,v\in \overline{B}_K,\,\, \forall k\in \N,
$$
there exist a subsequence $\{k_{j,1}\}_{j\in \N}\subset \N$ and Lipschitz mappings $d\varphi_1:\overline{B}_1\to \R^N$ and $d\psi_1: \overline{B}_K\to \R^N$ such that $\varphi_{k_{j,1},1}\rightrightarrows d\varphi_1$ uniformly on $\overline{B}_1$ and $ \psi_{k_{j,1},1}\rightrightarrows d\psi_1$ uniformly on $\overline{B}_K$ (notice that $\{\varphi_{k,1}\}_{k\in \N}$ and $\{\psi_{k,1}\}_{k\in \N}$ have uniform Lipschitz constants). Furthermore, it is clear that
$$
\frac{1}{K}\|u-v\|\leq \|d\varphi_1(u)-d\varphi_1(v)\|\leq K\|u-v\|, \quad \forall u,v\in \overline{B}_1
$$
and
$$
\frac{1}{K}\|z-w\|\leq \|d\psi_1(z)-d\psi_1(w)\|\leq K\|z-w\|, \quad \forall z,w\in \overline{B}_K.
$$
Likewise as above, for each $m>1$, we have
$$
\frac{1}{K}\|x-y\|\leq \|\varphi_{k,m}(x)-\varphi_{k,m}(y)\|\leq K\|x-y\|, \quad x,y\in \overline{B}_m,\,\, \forall k\in \N
$$
and
$$
\frac{1}{K}\|u-v\|\leq \|\psi_{k,m}(u)-\psi_{k,m}(v)\|\leq K\|u-v\|, \quad u,v\in \overline{B}_{mK},\,\, \forall k\in \N.
$$
Therefore, for each $m>1$, there exist a subsequence $\{k_{j,m}\}_{j\in \N}\subset\{k_{j,m-1}\}_{j\in \N}$ and Lipschitz mappings $d\varphi_m\colon \overline{B}_m\to \R^N$ and $d\psi_m\colon \overline{B}_{mK}\to \R^N$ such that $\varphi_{k_{j,m},m}\rightrightarrows d\varphi_m$ uniformly on $\overline{B}_m$ and $ \psi_{k_{j,m},m}\rightrightarrows d\psi_m$ uniformly on $\overline{B}_{mK}$ with $d\varphi_m|_{\overline{B}_{m-1}}=d\varphi_{m-1}$ and $d\psi_m|_{\overline{B}_{(m-1)K}}=d\psi_{m-1}$. Furthermore,
\begin{equation}\label{diflipum}
\frac{1}{K}\|u-v\|\leq \|d\varphi_m(u)-d\varphi_m(v)\|\leq K\|u-v\|, \quad \forall u,v\in \overline{B}_m
\end{equation}
and
\begin{equation}\label{diflipdois}
\frac{1}{K}\|z-w\|\leq \|d\psi_m(z)-d\psi_m(w)\|\leq K\|z-w\|, \quad \forall z,w\in \overline{B}_{mK}.
\end{equation}

Let us define $d\varphi,d\psi:\R^N\to \R^N$ by $d\varphi(x)=d\varphi_m(x)$, if $x\in \overline{B}_m$ and $d\psi(x)=d\psi_m(x)$, if $x\in \overline{B}_{mK}$ and, for each  $j\in \N$, let $t_j=n_j=k_{j,j}$.

\bigskip

\noindent {\it Claim 1.} $\varphi_{n_j}\rightarrow d\varphi$ and $\psi_{n_j}\rightarrow d\psi$ uniformly on compact subsets of $\R^N$.

\bigskip

Let $F\subset \R^N$ be a compact subset. Let us take  $m\in \N$ such that $F\subset \overline{B}_m\subset \overline{B}_{mK}$. Thus, $\{n_j\}_{j>m}$ is a subsequence of $\{k_{j,m}\}_{j\in \N}$ and, since  $\varphi_{k_{j,m},m}\rightrightarrows d\varphi_m$ uniformly on $\overline{B}_m$ and $ \psi_{k_{j,m},m}\rightrightarrows d\psi_m$ uniformly on $\overline{B}_{mK}$, it follows that $\varphi_{n_j}\rightrightarrows d\varphi$ and $\psi_{n_j}\rightrightarrows d\psi$ uniformly on $F$.

\bigskip

\noindent {\it Claim 2.} $d\varphi:\R^N\to \R^N$ bi-Lipschitz homeomorphism and $d\psi= (d\varphi)^{-1}$.

\bigskip

It follows from inequalities (\ref{diflipum}) and (\ref{diflipdois}) that $d\varphi,d\psi:\R^N\to \R^N$ are Lipschitz mappings. Therefore, it is enough to show that $d\psi= (d\varphi)^{-1}$. In order to do that, let $v\in \R^N$ and $w=d\varphi(v)=\lim \limits_{j\to \infty }\frac{\varphi(t_jv)}{t_j}$. Thus,
$$
\begin{array}{lllll}
\|d\psi(w)-v\|&=&\bigg\|\lim \limits_{j\to \infty }\frac{\psi(t_jw)}{t_j} - v\bigg\| &=&\lim \limits_{j\to \infty }\bigg\|\frac{\psi(t_jw)}{t_j} - \frac{t_jv}{t_j}\bigg\|\\
				    &=&\lim \limits_{j\to \infty }\frac{1}{t_j}\bigg\|\psi(t_jw) - t_jv\bigg\| &=&\lim \limits_{j\to \infty }\frac{1}{t_j}\bigg\|\psi(t_jw) - \psi(\varphi(t_jv))\bigg\|\\
				    &\leq &\lim \limits_{j\to \infty }\frac{K}{t_j}\bigg\|t_jw - \varphi(t_jv)\bigg\| &\leq &\lim \limits_{j\to \infty }K\bigg\|w - \frac{\varphi(t_jv)}{t_j}\bigg\|\\
					&=&0.& &
\end{array}
$$

Then, $d\psi(w)=d\psi(d\varphi(v))=v$, for all $v\in \R^N$, i.e., $d\psi\circ d\varphi={\rm id}_{\R^N}$. Analogously, one can show that $d\varphi\circ d\psi={\rm id}_{\R^N}$.

\bigskip

\noindent {\it Claim 3.} $d\varphi(C_{\infty }(X))= C_{\infty }(Y)$.

\bigskip

By  Claim 2, it is enough to verify that $d\varphi(C_{\infty }(X))\subset C_{\infty }(Y)$. In order to do that, let  $v\in C_{\infty }(X)$. Then, there is $\alpha\colon (\varepsilon, \infty)\to X$ such that $\alpha(t)=tv+o_{\infty }(t)$. Thus, $\varphi(\alpha(t))=\varphi(tv)+o_{\infty }(t)$, since $\varphi$ is a Lipschitz mapping. However, $\varphi(t_jv)=t_jd\varphi(v)+o_{\infty }(t_j)$ and then
$$
d\varphi (v)=\lim \limits_{j\to \infty }\varphi_{n_j}(v)=\lim \limits_{j\to \infty }\frac{\varphi(t_jv)}{t_j}=\lim \limits_{j\to \infty }\frac{\varphi(\alpha(t_j))}{t_j}\in C_{\infty }(Y).
$$

Therefore, $d\varphi:C_{\infty }(X)\to C_{\infty }(Y)$ is a bi-Lipschitz homeomorphism. We finish the proof by remarking that $C_{\infty }(A)=C_{\infty }(X)$ and $C_{\infty }(B)=C_{\infty }(Y)$.

\end{proof}

Let us finish this section pointing out the following result which we are going to use in the proof of Theorem \ref{main theorem}

\begin{corollary}
Let $X\subset\C^n$ be a complex algebraic subset. If $X$ is Lipschitz regular
at infinity, then the tangent cone of $X$ at infinity is Lipschitz regular at infinity.
\end{corollary}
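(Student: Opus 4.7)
The plan is to apply Theorem \ref{tg_cones} directly, with target $\R^k$. Since $X$ is Lipschitz regular at infinity, by definition there exists $k\in\N$ such that $X$ and $\R^k$ are bi-Lipschitz homeomorphic at infinity. The set $X$ is semialgebraic (a complex algebraic subset of $\C^n$ is in particular a real algebraic, and hence semialgebraic, subset of $\R^{2n}$), and $\R^k$ is trivially semialgebraic; both are unbounded. So Theorem \ref{tg_cones} applies and gives that $C_{\infty}(X)$ and $C_{\infty}(\R^k)$ are bi-Lipschitz homeomorphic.

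Next I would verify the trivial identification $C_{\infty}(\R^k)=\R^k$. Indeed, for any sequence $t_j\to+\infty$, the rescaled slices $\frac{1}{t_j}\bigl(\R^k\cap\{\|x\|=t_j\}\bigr)$ all equal the unit sphere $\mathbb{S}^{k-1}$, whose Hausdorff limit is $\mathbb{S}^{k-1}$ itself; taking the cone over $\mathbb{S}^{k-1}$ recovers $\R^k$.

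Combining these two observations, $C_{\infty}(X)$ is globally bi-Lipschitz homeomorphic to $\R^k$. Restricting such a homeomorphism to the complements of any pair of corresponding compact subsets (for example the empty set, or the preimage of a closed Euclidean ball) gives a bi-Lipschitz homeomorphism at infinity between $C_{\infty}(X)$ and $\R^k$, so $C_{\infty}(X)$ is Lipschitz regular at infinity. I do not expect a significant obstacle: the entire argument is a direct chain of invocations of Theorem \ref{tg_cones} and the definitions, the only calculation being the elementary identity $C_{\infty}(\R^k)=\R^k$.
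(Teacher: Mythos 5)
Your proposal is correct and is exactly the argument the paper intends: the corollary is stated without proof as an immediate consequence of Theorem \ref{tg_cones} applied to $X$ and $\R^k$, together with the identification $C_{\infty}(\R^k)=\R^k$ (the paper itself uses $C_{\infty}(\R^N\setminus B)=\R^N$ in the proof of Theorem \ref{main theorem}). Nothing is missing.
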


\section{Proof of Theorem \ref{main theorem}}\label{main}

Let us begin this section by recalling some basic facts about degree of complex algebraic sets. More precisely, we are going to see that affine linear subspaces of $\C^n$ are characterized among algebraic subsets of $\C^n$ by being those  of degree 1. In such a direction, let $\iota\colon\C^n\hookrightarrow \mathbb{P}^n$ be the  embedding given by $\iota(x_1,...,x_n)=[1:x_1:...:x_n]$ and let $p\colon \C^{n+1}\setminus\{0\}\to \mathbb{P}^n$ be the projection mapping given by $p(x_0,x_1,\dots,x_n)=[x_0:x_1:...:x_n]$.

\begin{remark}
 {\rm Let $A$ be an algebraic set in $\mathbb{P}^n$ and $X$ be an algebraic set in $\C^n$. Then $\widetilde{A}=p^{-1}(A)\cup \{0\}$ is a homogeneous complex algebraic set in $\C^{n+1}$ and the closure $\overline{\iota(X)}$ of $\iota(X)$ in $\mathbb{P}^n$ is an algebraic set in $\mathbb{P}^n$.}
\end{remark}
\begin{definition}
 Let $A$ be an algebraic set in $\mathbb{P}^n$. We define {\bf the degree of} $A$ by ${\rm deg}(A)=m(\widetilde{A},0)$, where $m(\widetilde{A},0)$ is the multiplicity of $\widetilde{A}$ at $0\in\C^{n+1}$.
\end{definition}
\begin{definition}
 Let $X$ be a complex algebraic set in $\C^n$. We define {\bf the degree of} $X$ by $ {\rm deg}(X) ={\rm deg}(\overline{\iota(X)})$.
\end{definition}

\begin{proposition}\label{linear_prop}
Let $X\subset \mathbb{\C}^n$ be an algebraic subset. Then,  ${\rm deg}(X)=1$ if and only if $X$ is an affine linear subspace of $\mathbb{C}^n$.
\end{proposition}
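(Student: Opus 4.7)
The plan is to unwind the definition of degree so that ${\rm deg}(X)=1$ becomes the assertion $m(\widetilde A,0)=1$, where $A=\overline{\iota(X)}\subset\mathbb{P}^n$ and $\widetilde A=p^{-1}(A)\cup\{0\}\subset\C^{n+1}$ is the affine cone over $A$, and then to invoke the principle that multiplicity one forces analytic smoothness. The converse direction is immediate: if $X$ is a $d$-dimensional affine linear subspace of $\C^n$, then $\overline{\iota(X)}$ is a projective linear subspace of $\mathbb{P}^n$ of dimension $d$ and its affine cone is a $(d+1)$-dimensional linear subspace of $\C^{n+1}$; any linear subspace is smooth at each of its points and therefore has multiplicity one there, so ${\rm deg}(X)=1$.

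For the non-trivial direction I would proceed in three steps. First, by hypothesis $m(\widetilde A,0)=1$, and by the classical criterion of multiplicity (see, e.g., Chirka, \emph{Complex Analytic Sets}) a germ of a complex analytic set has multiplicity one at a point if and only if it is analytically smooth at that point, in which case the tangent cone at that point agrees with the tangent space. Second, since $\widetilde A$ is homogeneous, its tangent cone at the origin coincides with $\widetilde A$ itself, so $\widetilde A=T_0\widetilde A$ is a linear subspace of $\C^{n+1}$. Third, projectivizing, $A=p(\widetilde A\setminus\{0\})$ is a projective linear subspace of $\mathbb{P}^n$, and intersecting with the standard affine chart $U=\{[x_0:\cdots:x_n]:x_0\neq 0\}$ (using that $\iota(X)$ is closed in $U$ and $A$ is its closure in $\mathbb{P}^n$, so that $\iota(X)=A\cap U$) yields an affine linear subspace of $U\cong\C^n$. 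Hence $X$ is an affine linear subspace of $\C^n$.

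The step I expect to require the most care is the invocation of the multiplicity-one-means-smoothness criterion. For hypersurfaces this is immediate from the Taylor expansion of a defining equation (a non-vanishing linear term gives a submersion), but for complex analytic sets of arbitrary codimension one must argue via the projection-theoretic definition of multiplicity together with the structure of the tangent cone of a pure-dimensional germ. A cleaner alternative, if one wishes to avoid any appeal to analytic multiplicity theory altogether, is to quote the classical algebraic-geometric fact — a direct consequence of Bezout's theorem applied to generic linear slices — that a projective variety of degree one in $\mathbb{P}^n$ is necessarily a projective linear subspace; this immediately reduces the whole statement to the elementary descent from projective to affine linear subspaces described above.
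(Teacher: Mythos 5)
Your proposal is correct and follows essentially the same route as the paper: reduce ${\rm deg}(X)=1$ to $m(\widetilde A,0)=1$, use that a homogeneous algebraic set with multiplicity one at the origin is a linear subspace of $\C^{n+1}$, and then descend from the projective linear subspace $A$ to the affine chart. You merely make explicit the step the paper leaves as a remark (multiplicity one implies smoothness, and homogeneity forces $\widetilde A$ to equal its tangent cone at $0$), which is a welcome elaboration rather than a departure.
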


\begin{proof}
 Let $A=\overline{\iota(X)}$ be the closure of $\iota(X)$ in $\mathbb{P}^n$. By definition, ${\rm deg}(X)={\rm deg}(A)$ and ${\rm deg}(A)=m(\widetilde{A},0)$, where $\widetilde{A}=p^{-1}(A)\cup \{0\}$. Thus, ${\rm deg}(X)=1$ if and only if $m(\widetilde{A},0)=1$. However, as $\widetilde{A}$ is a homogeneous complex algebraic set in $\C^{n+1}$, $m(\widetilde{A},0)=1$ if and only if $\widetilde{A}$ is a complex linear subspace. In order to finish the proof, we remark that $\widetilde{A}$ is a complex linear subspace in $\C^{n+1}$ if and only if $A$ is a complex projective plane in $\mathbb{P}^n$.
\end{proof}

From now on, we are going to prove of the main results of the paper.

\begin{theorem}\label{proposition A}
Let $X\subset\C^n$ be a pure $d$-dimensional algebraic subset such that $C_{\infty}(X)$ is a complex linear subspace of $\C^n$. If there exists a compact subset $K\subset\C^n$ such  that $X\setminus K$ is Lipschitz normally embedded, then $X$ is an affine linear subspace of $\C^n$.
\end{theorem}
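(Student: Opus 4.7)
The plan is to reduce the statement to showing $\deg(X)=1$, whereupon Proposition \ref{linear_prop} closes the argument. Set $T=C_\infty(X)$ and let $\pi\colon\C^n\to T$ denote the orthogonal projection. Since $X$ is pure $d$-dimensional and $T$ is a complex linear $d$-subspace, the projective closures of $X$ and of the affine slices $z+T^\perp$ meet the hyperplane at infinity of $\mathbb{P}^n$ in $\mathbb{P}(T)$ and $\mathbb{P}(T^\perp)$ respectively, which are disjoint inside $\mathbb{P}^{n-1}$. Hence B\'ezout shows that $\pi|_X\colon X\to T$ is proper and finite of degree $k:=\deg(X)$, and $\#\bigl(X\cap(z+T^\perp)\bigr)=k$ for $z$ outside a proper closed algebraic subset $B\subset T$ (the branch locus); note $\dim_\C B\leq d-1$. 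Assume for contradiction that $k\geq 2$.

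Given any $z\in T\setminus B$ with $|z|$ large, pick two distinct points $p_1,p_2\in\pi|_X^{-1}(z)$ and write $p_i=z+v_i$ with $v_i\in T^\perp$. From $C_\infty(X)=T$ one has $d(p,T)<\varepsilon|p|$ for $p\in X$ of sufficiently large norm; combined with the orthogonal identity $|p_i|^2=|z|^2+|v_i|^2$, this gives $|v_i|=o(|z|)$, hence $|p_1-p_2|=o(|z|)$. Since $p_1,p_2\in X\setminus K$ eventually, the Lipschitz normal embedding hypothesis furnishes a path $\gamma\subset X\setminus K$ from $p_1$ to $p_2$ of length $L$ arbitrarily close to $\lambda|p_1-p_2|$; in particular $L=o(|z|)$.

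Next I would perturb $\gamma$ slightly so that $\pi\circ\gamma$ avoids $B$: this is legitimate because $\pi^{-1}(B)\cap X$ has real codimension $\geq 2$ in $X$ (as $\pi|_X$ is finite and $B$ has complex codimension $\geq 1$ in $T$), so a generic $C^1$-small perturbation in the open set $X\setminus K$ misses it while only negligibly affecting the length. The projected loop $\pi\circ\gamma$ lies in $T\setminus B$, is based at $z$, and has length at most $L$; its unique lift under the \'etale covering $\pi|_X\colon X\setminus\pi^{-1}(B)\to T\setminus B$ joins $p_1$ to $p_2$, so it is not null-homotopic in $\pi_1(T\setminus B,z)$. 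Being contained in the closed Euclidean ball of radius $L/2$ about $z$, which is convex, this loop would contract in that ball were the ball disjoint from $B$; therefore $d(z,B)\leq L/2=o(|z|)$.

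Finally, allowing $z\in T\setminus B$ to tend to infinity in all directions (legitimate because $T\setminus B$ is open dense in $T$), the bound $d(z,B)=o(|z|)$ forces $C_\infty(B)\supseteq T\cap\mathbb{S}^{2n-1}$, hence $C_\infty(B)=T$. But for any semialgebraic set $\dim_\R C_\infty(B)\leq\dim_\R B\leq 2d-2<2d=\dim_\R T$, a contradiction; thus $k=1$, finishing the proof. I expect the perturbation-and-monodromy step to be the principal technical hurdle: one must simultaneously guarantee a negligible length increase, containment in $X\setminus K$, and avoidance of $B$ under projection, in order to legitimately invoke the covering-space structure of $\pi|_X$.
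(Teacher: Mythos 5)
Your argument is correct and, at its core, it is the same as the paper's: reduce to showing $\deg(X)=1$ via Proposition \ref{linear_prop}, view the orthogonal projection $\pi|_X\colon X\to C_{\infty}(X)$ as a ramified cover of degree $\deg(X)$, note that two points of a fiber over $z$ are at Euclidean distance $o(\|z\|)$ because the tangent cone at infinity is linear, and then use the monodromy of the covering over the complement of the branch locus to show that a short path in $X$ joining them would project to a loop contracting inside a convex ball disjoint from the branch locus, contradicting Lipschitz normal embedding. The write-ups differ only in how the final contradiction is extracted: the paper first fixes a single unit direction $v_0\notin C_{\infty}(\Sigma)$ together with a cone $C_{\lambda,R}$ around the ray $tv_0$ missing $\Sigma$, and compares two liftings of that ray (tangent at infinity, yet with inner distance at least $2\lambda t$), whereas you let $z$ range over all of $T\setminus B$, deduce $d(z,B)=o(\|z\|)$ uniformly, and contradict $\dim_{\R}C_{\infty}(B)\le\dim_{\R}B<\dim_{\R}T$. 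These are contrapositive formulations of the same dimension count on the branch locus, and both work. One concrete remark: the perturbation step you flag as the principal technical hurdle is both the shakiest part of your proposal (deforming a path inside a possibly singular variety $X$ to avoid a real codimension-two subset, with length control and rel endpoints, needs more than ``generic $C^1$-small perturbation'') and entirely dispensable. If $d(z,B)>L/2$, then the closed Euclidean ball of radius $L/2$ centred at $z$ is disjoint from $B$; since $\pi$ is $1$-Lipschitz and every point of a path of length $L$ lies within $L/2$ of one of its endpoints, the projected loop already lies in that ball, so the original path automatically avoids $\pi^{-1}(B)$ and the covering-space argument applies to it directly. This dichotomy gives $d(z,B)\le L/2$ with no perturbation at all, and it is precisely the (implicit) justification of the paper's inequality that any such path has length at least $2\lambda t$.
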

\begin{proof} Since $C_{\infty }(X)$ is a linear subspace of $\C^n$, one can consider the orthogonal projection $\pi\colon \C^n\rightarrow C_{\infty }(X).$ Let us choose linear coordinates $(x,y)$ in $\C^n$ such that $$C_{\infty }(X)=\{(x,y)\in\C^n;\,y=0\}.$$ Notice that the restriction of the orthogonal projection $\pi$ to $X$ has the following properties:
\begin{enumerate}
\item [1)] There exist a compact subset $K\subset \C^n$ and a constant $C>0$ such that $\|y\|\leq C\|x\|$ for all $(x,y)\in X\setminus K$.
\item [2)] If $\gamma :(\varepsilon,\infty )\rightarrow X$ is an arc such that $\lim\limits_{t\to +\infty}\|\gamma(t)\|=+\infty $ and $\pi\circ\gamma(t)=tv+o_{\infty}(t)$, then $\gamma(t)=tv+o_{\infty}(t)$.
\end{enumerate}

Indeed, if 1) is not true, there exists a sequence $\{(x_k,y_k)\}\subset X$ such that $\lim\limits_{k\to+\infty}\|(x_k,y_k)\|=+\infty$ and $\|y_k\|> k\|x_k\|$. Thus, up to a subsequence,  one can suppose that $\lim\limits_{k\to+\infty}\frac{y_k}{\|y_k\|}=y_0$. Since $\frac{\|x_k\|}{\|y_k\|}< \frac{1}{k}$,  $(0,y_0)\in C_{\infty }(X)$, which is a contradiction, because $y_0\not=0$. Therefore, 1) is true.

Now, in order to prove 2), let us write $\gamma(t)=(x(t),y(t))$. By 1), there exists $t_0>0$ such that  $\|y(t)\|\leq C\|x(t)\|$ for all $t\geq t_0$, since $\lim\limits_{t\to +\infty}\|\gamma(t)\|=+\infty$.  Thus, since $\frac{x(t)}{t}$ is bounded, $\frac{y(t)}{t}$ is bounded. Let us suppose that $y(t)\not=o_{\infty}(t)$. Then, there exist a sequence $\{t_k\}_{k\in \N}$ and $r>0$ such that $t_k\to +\infty$ and $\frac{\|y(t_k)\|}{t_k}\geq r$ for all $k$. Since $\left\{\frac{y(t_k)}{t_k}\right\}_{k\in \N}$ is bounded, up to a subsequence, one can suppose  that $\lim\limits_{k\to +\infty}\frac{y(t_k)}{t_k}=y_0$. Therefore, $\lim\limits_{k\to +\infty}\frac{\gamma(t_k)}{t_k}=(v',y_0)\in C_{\infty }(X)$, where $v=(v',0)$. However, this is a contradiction, since $\|y_0\|\geq r>0$ and this implies that $y_0\not=0$. Then, $y(t)=o_{\infty}(t)$ and, therefore, $\gamma(t)=tv+o_{\infty}(t)$.

Let $L=\pi^{-1}(0)$. One can see that $\overline{\iota(X)}\cap\overline{\iota(L)}\cap (\mathbb{P}^n\setminus \iota( \C^n))=\emptyset $. Therefore, $\pi|_{ X}\colon X\rightarrow C_{\infty }(X)$ is a ramified cover with degree equal to ${\rm deg} (X)$ (see \cite{Chirka:1989}, Corollary 1 in the page 126). Moreover, the ramification locus of $\pi|_X$ is a codimension $\geq 1$ complex algebraic subset $\Sigma$ of the linear space $ C_{\infty }(X)$.

Let us suppose that the degree ${\rm deg} (X)$ is greater than 1. Since $\Sigma$ is a codimension $\geq 1$ complex algebraic subset of the space $C_{\infty }(X)$, there exists a unit tangent vector $v_0\in C_{\infty }(X)\setminus C_{\infty }(\Sigma)$,
where $C_{\infty }(\Sigma) $ is the tangent cone at infinity of $\Sigma$.

Since $v_0$ is not tangent to $\Sigma$ at infinity, there exist positive real numbers $\lambda$ and $R$ such that
$$C_{\lambda,R}=\{v\in C_{\infty }(X); \ \|v-tv_0\|< \lambda t, \ \forall \ t>R \}$$
does not intersect the set $\Sigma$. Since we have assumed that the degree ${\rm deg} (X)\geq 2$, we have at least two different liftings $\gamma_1(t)$ and $\gamma_2(t)$ of the half-line $r(t)=tv_0$,  i.e. $\pi(\gamma_1(t))=\pi(\gamma_2(t))=tv_0$.  Since $\pi$ is the orthogonal projection on $C_{\infty }(X)$ and the vector $v_0$ is the unit tangent vector at infinity to the images
$\pi\circ \gamma_1$ and $\pi\circ \gamma_2$, then $v_0$ is the tangent vector at infinity to the arcs $\gamma_1$ and $\gamma_2$. By construction, we have ${\rm dist}(\gamma_i(t),{\pi|_ X}^{-1}(\Sigma))\geq \lambda t$ for $i=1,2$, where by dist we mean the Euclidean distance.

On the other hand, any path in $X$ connecting $\gamma_1(t)$ to $\gamma_2(t)$ is the lifting of a loop, based at the point $tv_0$, which is not contractible in $C_{\infty }(X)\setminus {\Sigma}$. Thus, the length of such a path must be at least  $2\lambda t$. It implies that the inner distance, $\rm{d_{X}}(\gamma_1(t),\gamma_2(t))$, in $X$, between $\gamma_1(t)$ and $\gamma_2(t)$, is at least $2\lambda t$. But, since $\gamma_1(t)$ and $\gamma_2(t)$ are tangent at infinity, that is,
$$\frac{\|\gamma_1(t)-\gamma_2(t)\|}{t}\to 0 \ \mbox{ as } \ t\to +\infty ,$$
and $\lambda>0$, we obtain that $X$ is not Lipschitz normally embedded at infinity. Otherwise there will be $\widetilde C >0$ and a compact subset $K\subset\C^n$ such that:
$$d_X(x_1,x_2)\leq \widetilde  C \|x_1-x_2\| \quad \mbox{for all} \quad  x_1,x_2\in X\setminus K,$$
hence:
\begin{eqnarray*}
2\lambda &\leq & \frac{\rm{d_{X}}(\gamma_1(t),\gamma_2(t))}{t} \\
&\leq &\widetilde C \frac{\|\gamma_1(t)-\gamma_2(t)\|}{t}\to 0 \ \ \mbox{as} \ t\to +\infty,
\end{eqnarray*}
which is a contradiction. We have concluded that ${\rm deg}(X)=1$ and, by Proposition \ref{linear_prop}, it follows that $X$ is an affine linear subspace.

\end{proof}

Notice that the assumptions in  Theorem \ref{proposition A} are sharp in the sense that, in order to get the same conclusion, none of those assumptions can be removed, as we can see bellow.
\begin{example}
The plane complex curve $X=\{(x,y)\in \C^2;\, y=x^2\}$ has linear tangent cone at infinity; $X$ is not an affine linear subset of $\mathbb{C}^2$. As another example, $Z=\{(x,y,z)\in \C^3;\, x^2+y^2+z^2=0\}$. We see that $Z$ is Lipschitz normally embedded; $Z$ is not an affine linear subset of $\C^3$.
\end{example}

As an application of Theorem \ref{proposition A}, we obtain a result like Theorem J in (\cite{GreeneW:1979}, p. 180) (see also \cite{SiuY:1977} and \cite{Itoh:1979}).

\begin{corollary}\label{analytic_version}
Let $X\subset\C^n$ be a pure $d$-dimensional analytic subset. Suppose $X$ has a unique tangent cone at infinity and it is a $d$-dimensional complex linear subspace of $\C^n$. If $X$ is closed and Lipschitz Normally Embedded at infinity, then $X$ is an affine linear subspace of $\C^n$.
\end{corollary}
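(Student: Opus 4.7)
My plan is to reduce Corollary \ref{analytic_version} to Theorem \ref{proposition A} by showing that the analytic set $X$ is automatically algebraic.

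First I would observe that the proof of property 1) inside Theorem \ref{proposition A}---namely, that in coordinates where $C_\infty(X)=\C^d\times\{0\}$ one has $\|y\|\leq C\|x\|$ for $(x,y)\in X$ outside a compact set---uses only the sequence definition and the uniqueness of the tangent cone at infinity, so it applies verbatim in the analytic setting. Combining this linear growth bound with the closedness of $X$, the orthogonal projection $\pi:\C^n\to\C^d$, $(x,y)\mapsto x$, has $\pi^{-1}(A)\cap X$ compact for every bounded $A\subset\C^d$; hence $\pi|_X:X\to\C^d$ is a proper holomorphic map. Since $X$ is pure of dimension $d$, Remmert's proper mapping theorem forces $\pi(X)$ to be an analytic subset of $\C^d$ of full dimension $d$, so $\pi(X)=\C^d$. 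Each fiber, being a compact analytic subset of an affine space $\{x\}\times\C^{n-d}$, is finite. Thus $\pi|_X$ realizes $X$ as a $k$-sheeted branched covering of $\C^d$ for some positive integer $k$.

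The crucial step is then algebraicity via Liouville. For each coordinate $y^j$ ($j=1,\dots,n-d$) and each $s\in\{1,\dots,k\}$, the trace
\[
p_{j,s}(x)=\sum_{p\in(\pi|_X)^{-1}(x)}\bigl(y^j(p)\bigr)^s,\qquad\text{counted with multiplicity,}
\]
is a well-defined entire function on $\C^d$---a standard consequence of $\pi|_X$ being a proper finite holomorphic map. The estimate $|y^j|\leq C\|x\|$ gives $|p_{j,s}(x)|\leq kC^s\|x\|^s$ for $\|x\|$ large, so $p_{j,s}$ is an entire function of polynomial growth, hence a polynomial. Newton's identities then force the elementary symmetric functions of the $k$ branches $\{y_i^j(x)\}_{i=1}^k$ to be polynomials in $x$, so each $y^j$ satisfies a monic polynomial identity in $y^j$ of degree $k$ with polynomial coefficients. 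The common zero set $Z\subset\C^n$ of the resulting $n-d$ polynomials is finite over $\C^d$, hence pure of dimension $d$, and it contains $X$. Because a closed pure $d$-dimensional analytic subset of a pure $d$-dimensional algebraic variety must coincide with a union of irreducible components of that variety, $X$ itself is algebraic. Theorem \ref{proposition A} applied to this algebraic $X$---whose tangent cone at infinity is linear and which is Lipschitz normally embedded at infinity by hypothesis---then delivers the conclusion that $X$ is an affine linear subspace of $\C^n$.

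I expect the main obstacle to be the algebraicity step. The sensitive points are the passage from the locally defined analytic branches $y_i^j(x)$ (multivalued away from the branch locus) to globally defined entire traces via the proper finite structure of $\pi|_X$, and the dimension-matching argument at the end that identifies $X$ with a union of irreducible components of $Z$.
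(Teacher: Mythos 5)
Your argument is correct and follows essentially the same route as the paper: establish the linear growth bound $\|y\|\le C\|x\|$ outside a compact set from the tangent-cone hypothesis (exactly as in property 1) of the proof of Theorem \ref{proposition A}), deduce that $X$ is algebraic, and then invoke Theorem \ref{proposition A}. The only difference is that where the paper simply cites Theorem 2 on p.~77 of Chirka's book for the algebraicity step, you supply a self-contained proof of that statement via the proper finite projection, the polynomial-growth traces, and Newton's identities --- which is in fact the standard proof of the cited result.
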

\begin{proof} Let us choose linear coordinates $(x,y)$ in $\C^n$ such that $C_{\infty }(X)=\{(x,y)\in\C^n;\,y=0\}$ and consider the orthogonal projection $\pi\colon \C^n\rightarrow C_{\infty }(X).$ 

\noindent {\bf Claim.} There exist positive constants $C$ and $\rho$ such that $X\subset \{(x,y);\|y\|< C\|x\|\}\cup B_{\rho}$.

Indeed, since $C_{\infty }(X)=\{(x,y)\in\C^n;\,y=0\}$, by the proof of Theorem \ref{proposition A}, there exist a positive constant $C$ and a compact $K\subset \C^n$ such that $\|y\|< C\|x\|$ for all $(x,y)\in X\setminus K$. Let $\rho$ be a positive number such that $K\subset B_{\rho}$. Thus, we have that $X\subset \{(x,y);\|y\|< C\|x\|\}\cup B_{\rho}$.

Now, by Theorem 2 in (\cite{Chirka:1989}, page 77), $X$ is an algebraic set and the proof follows from Theorem \ref{proposition A}.
\end{proof}

Let us remark that the real version of Corollary \ref{analytic_version} does not hold true in general, as it is shown bellow.

\begin{example}
The set $X=\{(x,y)\in\R^2;\,y=\sin{x}\}$ is Lipschitz Normally embedded, since it is bi-Lipschitz homeomorphic to $\R$ and, moreover, $X$ has a unique tangent cone at infinity with $C_{\infty }X=\{(x,y)\in\R^2;\,y=0\}$. However, $X$ is not an algebraic set and, in particular, it is not a linear subspace of $\R^2$.
\end{example}

Let $X\subset\C^n$ be a complex algebraic subset. Let $\mathcal{I}(X)$ be the ideal of $\C[x_1,...,x_n]$ given by the polynomials which vanishes on $X$. For each $f\in\C[x_1,...,x_n]$, let us denote by $f^*$ the homogeneous polynomial composed of the monomials in $f$ of maximum degree.

\begin{proposition}[Theorem 1.1 in \cite{LeP:2016}]
Let $X\subset\C^n$ be a complex algebraic subset. Then, $C_{\infty }(X)$ is the affine variety $V(\langle f^*;\, f\in \mathcal{I}(X)\rangle) $.
\end{proposition}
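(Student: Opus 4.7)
Plan:

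The identity to prove is $C_\infty(X) = W$, where $W := V(\langle f^* : f \in \mathcal{I}(X) \rangle)$. I would establish this by two inclusions. The easy direction $C_\infty(X) \subseteq W$ proceeds by direct computation: for $v \in C_\infty(X) \setminus \{0\}$, pick $x_k \in X$ with $\|x_k\| = t_k \to +\infty$ and $x_k/t_k \to v/\|v\|$, as in the definition of the tangent cone at infinity. For each $f \in \mathcal{I}(X)$ of degree $d$, decompose $f = f^* + g$ with $\deg g < d$. From $f(x_k) = 0$ and $|g(x_k)| = O(\|x_k\|^{d-1})$, dividing by $\|x_k\|^d$ and using homogeneity of $f^*$ yields $f^*(x_k/\|x_k\|) \to 0$, hence $f^*(v/\|v\|) = 0$ and so $f^*(v) = 0$.

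For the reverse inclusion $W \subseteq C_\infty(X)$, I would pass to the projective closure. Embedding $\iota\colon \C^n \hookrightarrow \mathbb{P}^n$ by $\iota(x_1, \ldots, x_n) = [1:x_1:\cdots:x_n]$ and letting $\overline{X}$ denote the closure of $\iota(X)$, the ideal of $\overline{X}$ is generated by the homogenizations $f^h$ of $f \in \mathcal{I}(X)$; setting $x_0 = 0$ in $f^h$ recovers exactly $f^*$. Thus $\overline{X} \cap H_\infty$, where $H_\infty := \{x_0 = 0\} \cong \mathbb{P}^{n-1}$, is cut out by $\{f^* : f \in \mathcal{I}(X)\}$, so $W$ is precisely the affine cone in $\C^n$ over $\overline{X} \cap H_\infty$. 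Given $v \in W \setminus \{0\}$, we have $[0:v] \in \overline{X}$; I would choose an irreducible component $Y$ of $\overline{X}$ containing $[0:v]$, which cannot lie in $H_\infty$ since $X$ has affine points, so $Y \cap \C^n$ is Zariski-dense in $Y$.

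Using the existence of a local holomorphic branch of $Y$ at $[0:v]$ (for instance, intersect $Y$ with a generic linear subspace through $[0:v]$ to produce an irreducible projective curve and normalize it), I obtain a holomorphic $\sigma\colon (D, 0) \to (Y, [0:v])$ with $\sigma(D \setminus \{0\}) \subset Y \cap \C^n \subset X$. Locally, lift $\sigma$ to $\tilde\sigma\colon D \to \C^{n+1}$ with $\tilde\sigma(0) = (0, v)$, and write $\tilde\sigma_0(s) = s^k u(s)$ with $u(0) \neq 0$. Restricting $s$ to the complex ray $s = r e^{i\theta}$ with $\theta := -\arg(u(0))/k$, one computes $\tilde\sigma_0(re^{i\theta}) = r^k |u(0)| + O(r^{k+1})$, asymptotically real positive, and with $T(r) := 1/(r^k |u(0)|) \to +\infty$ the affine point $x(r) := (\tilde\sigma_1 / \tilde\sigma_0, \ldots, \tilde\sigma_n / \tilde\sigma_0)(re^{i\theta})$ satisfies $x(r) = T(r)\, v + o(T(r))$ as $r \to 0^+$. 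This produces a sequence in $X$ whose normalized direction converges to $v/\|v\|$, proving $v \in C_\infty(X)$.

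The main obstacle is in this last step. Applying naive real-semialgebraic curve selection (Lemma \ref{selection_curve}) at $[0:v] \in \overline{X}$ only yields $e^{i\phi} v \in C_\infty(X)$ for some unknown phase $e^{i\phi}$; recovering $v$ itself requires the complex-analytic structure, which supplies a two-real-parameter family of approach directions to the puncture in $D$, and the correct choice of ray makes the natural rescaling parameter positive real. This is where the assumption that $X$ is a complex (not merely real) algebraic set is essential.
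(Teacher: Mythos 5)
The paper does not actually prove this proposition: it is quoted as Theorem 1.1 of \cite{LeP:2016} and used as a black box, so there is no internal argument to compare yours against. Judged on its own terms, your proof is essentially correct and would serve as a self-contained derivation of the cited result. The inclusion $C_\infty(X)\subseteq V(\langle f^*\rangle)$ is the standard estimate and is fine. For the converse, you correctly identify the one genuinely delicate point: real semialgebraic curve selection at $[0:v]\in\overline{\iota(X)}$ only determines the limiting direction up to a unit complex phase, and the holomorphic disk together with the choice of ray $\theta=-\arg(u(0))/k$ is exactly what removes that ambiguity; the computation $x(r)=T(r)v+o(T(r))$ is right, and combined with the uniqueness of the tangent cone at infinity for semialgebraic sets (noted after Proposition \ref{selection_lemma}) it does place $v$ in $C_\infty(X)$ as defined in Section \ref{cones}. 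Two steps are invoked rather than proved and should be tightened: (i) the identification of $\overline{\iota(X)}\cap\{x_0=0\}$ with the projectivization of $V(\langle f^*\rangle)$ rests on the standard fact that $\overline{\iota(X)}=V(\{f^h:f\in\mathcal I(X)\})$, which deserves a citation; (ii) the claim that a generic linear section of $Y$ through $[0:v]$ is an \emph{irreducible} curve is a Bertini-type assertion you do not actually need --- it suffices that a general linear slice through $[0:v]$ is a curve meeting the hyperplane at infinity in a finite set (a dimension count, since $Y\cap\{x_0=0\}$ has codimension one in $Y$), so that any local branch at $[0:v]$, uniformized by Puiseux, yields the required disk $\sigma$ with $\sigma(D\setminus\{0\})\subset Y\cap\C^n$. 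With those two points made precise, the argument is complete.
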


It follows from above proposition that the tangent cone at infinity of a complex algebraic subset of $\C^n$ is a homogeneous complex algebraic subset and, therefore, it is a complex cone in the following sense: an algebraic subset of $\C^n$ is called a \emph{complex cone} if it is a union of one-dimensional complex linear subspaces of $\C^n$. The next result was proved by David Prill in \cite{Prill:1967}:

\begin{lemma}[Theorem in \cite{Prill:1967}] \label{prill} Let $V\subset \C^n$ be a complex cone. If $0\in V$ has a neighborhood homeomorphic to a Euclidean ball, then $V$ is a linear subspace of $\C^n$
\end{lemma}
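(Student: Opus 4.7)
My plan is to reduce the statement to Proposition \ref{linear_prop} by showing that $\deg V = 1$. Write $d = \dim_{\C} V$ and let $W = \mathbb{P}(V) \subset \mathbb{P}^{n-1}$ denote the projectivization, so that $\deg V = \deg W$. Because $V$ is a complex cone, a small neighborhood of $0$ in $V$ is homeomorphic to the open cone over the link $L := V \cap S^{2n-1}_\varepsilon$ (for small $\varepsilon > 0$). The topological-manifold hypothesis at $0$ therefore forces $L$ to be homeomorphic to $S^{2d-1}$, and in particular forces $V$ to be pure-dimensional (every point near $0$ must have a Euclidean neighborhood of the same dimension).

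The multiplicative circle action $e^{i\theta} \cdot z = e^{i\theta} z$ on $\C^n$ preserves $V$, restricts to a free action on $L$, and has quotient $W$. This realizes the projection $L \to W$ as the restriction to $W$ of the unit-circle subbundle of the tautological line bundle $\mathcal{O}(-1) \to \mathbb{P}^{n-1}$. Hence we obtain a principal $S^1$-bundle $S^1 \to S^{2d-1} \to W$ whose Euler class is
$$e \,:=\, c_1(\mathcal{O}(-1))|_W \,=\, -H|_W \,\in\, H^2(W;\Z),$$
where $H$ is the hyperplane class.

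For $d \geq 2$, I would feed this into the Gysin sequence of the bundle, using that $H^*(S^{2d-1};\Z)$ is concentrated in degrees $0$ and $2d-1$. A routine diagram chase shows that cup product with $e$ induces isomorphisms $H^k(W;\Z) \xrightarrow{\sim} H^{k+2}(W;\Z)$ for $0 \leq k \leq 2d-4$ and a surjection onto $H^{2d-2}(W;\Z)$. Since $W = S^{2d-1}/S^1$ is a closed orientable real $(2d-2)$-manifold, this yields $H^*(W;\Z) \cong \Z[e]/(e^d)$ as a graded ring, so that $e^{d-1}$ generates $H^{2d-2}(W;\Z) \cong \Z$ and, by Poincar\'e duality, pairs with the fundamental class to $\pm 1$. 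Therefore
$$\deg W \;=\; \langle (H|_W)^{d-1},[W]\rangle \;=\; (-1)^{d-1}\langle e^{d-1},[W]\rangle \;=\; \pm 1,$$
and positivity of degrees of complex projective varieties forces $\deg W = 1$. The case $d=1$ is immediate: $L \cong S^1$ is connected, so $W$ must be a single point and $V$ a single complex line through $0$. In either case Proposition \ref{linear_prop} concludes the argument.

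The main obstacle, in my view, is the careful identification of the circle bundle $L \to W$ with the restriction of the tautological bundle, together with the orientation bookkeeping that ensures the Gysin computation yields $|\langle e^{d-1},[W]\rangle| = 1$ rather than some larger integer. A subsidiary point is that, while $W$ may well be singular as a complex algebraic variety, the quotient description $W = S^{2d-1}/S^1$ equips it with the structure of a closed orientable smooth real manifold, and it is precisely this topological smoothness (a direct consequence of the hypothesis on $V$) that lets Poincar\'e duality and the fundamental-class pairing enter the picture.
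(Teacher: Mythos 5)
The paper offers no proof of this lemma at all---it is quoted directly from Prill's 1967 article---so there is no internal argument to compare against; I can only assess your proposal on its merits. Your overall strategy (realize the link $L=V\cap S^{2n-1}_\varepsilon$ as the unit-circle subbundle of $\mathcal{O}(-1)$ restricted to $W=\mathbb{P}(V)$, run the Gysin sequence, and force $\deg W=1$) is the standard route to Prill's theorem and is sound. But two of your steps are genuine gaps as written. First, ``the hypothesis forces $L$ to be homeomorphic to $S^{2d-1}$'' is not a valid inference: a compactum whose open cone is homeomorphic to $\R^{2d}$ need not be a sphere, nor even a manifold (double-suspension phenomena: for a homology $3$-sphere $\Sigma$ with nontrivial $\pi_1$, the open cone over the suspension $S\Sigma$ is $\R^5$ although $S\Sigma$ is not a manifold). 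What the hypothesis does give, via excision and the cone structure ($\widetilde H_*(L)\cong H_{*+1}(V,V\setminus\{0\})\cong H_{*+1}(\R^{2d},\R^{2d}\setminus\{0\})$), is that $L$ has the \emph{integral homology} of $S^{2d-1}$; since your Gysin computation only uses the cohomology of the total space, the argument survives if you weaken the claim accordingly.

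The second gap is more serious: the assertion that $W=L/S^1$ is thereby a closed orientable \emph{smooth} real manifold, on which you then invoke Poincar\'e duality, is unjustified and false in general---$W$ is the projectivization of the cone, a possibly badly singular projective variety, and nothing in the hypothesis smooths it (the quotient of a free topological $S^1$-action need not even be a manifold a priori). Fortunately Poincar\'e duality is not needed. Your Gysin chase already gives $H^{2d-2}(W;\Z)\cong\Z$ generated by $e^{d-1}$; the universal coefficient surjection $H^{2d-2}(W;\Z)\twoheadrightarrow \mathrm{Hom}(H_{2d-2}(W;\Z),\Z)\cong\Z^{r}$ ($r$ the number of irreducible components of $W$, each contributing a fundamental class to the top homology of the compact complex variety $W$) first forces $r=1$, and then forces the generator $e^{d-1}=\pm(H|_W)^{d-1}$ to evaluate to $\pm1$ on $[W]$; since $\langle (H|_W)^{d-1},[W]\rangle=\deg W>0$, this yields $\deg W=1$ with no manifold structure on $W$ whatsoever. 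With these two repairs (and the observation that $\deg W=1$ means the vertex multiplicity $m(V,0)=1$, so the argument of Proposition \ref{linear_prop} applies), your proof is correct and is essentially Prill's original one.
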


At this moment, we are ready to give a proof of Theorem \ref{main theorem}
\begin{proof} [Proof of Theorem \ref{main theorem}]

Let us suppose that the complex algebraic subset $X\subset\C^n$ is Lipschitz regular at infinity. Thus, let $h\colon U\rightarrow \R^N\setminus B$ be a bi-Lipschitz homeomorphism, where is an open neighborhood $U$ of the infinity in $X$ (i.e. a complement of a compact subset in X) and $B\subset\R^N$ is a closed Euclidean ball centered at the origin $0\in\R^N$. Let $C_{\infty }(X)$ be the tangent cone at infinity of $X$.
It comes from Theorem \ref{tg_cones} that there exists a bi-Lipschitz homeomorphism $dh\colon C_{\infty }(X) \rightarrow C_{\infty }(\R^N\setminus B)=\R^N$. In particular, $C_{\infty }(X)$ is a topological manifold. By Prill's Theorem (Lemma \ref{prill}), it follows that $C_{\infty }(X)$ is a complex linear subspace of $\C^n$. By Corollary \ref{neighborhood}, there exists a compact subset $K$ such that $X\setminus K$ is Lipschitz normally embedded and, by Theorem \ref{proposition A}, it follows that $X$ is an affine linear subspace of $\C^n$.
\end{proof}




\end{document}